\begin{document}

\newtheorem{assumption}{Assumption}[section]
\newtheorem{definition}{Definition}[section]
\newtheorem{lemma}{Lemma}[section]
\newtheorem{proposition}{Proposition}[section]
\newtheorem{theorem}{Theorem}[section]
\newtheorem{corollary}{Corollary}[section]
\newtheorem{remark}{Remark}[section]
\newtheorem{conjecture}{Conjecture}[section]
\newtheorem{example}{Example}[section]

\small

\title{A Linear Programming Approach to Dynamical Equivalence, Linear Conjugacy, and the Deficiency One Theorem \bigskip}
\author{Matthew D. Johnston\\
Department of Mathematics\\
San Jos\'{e} State University\\
One Washington Square\\
San Jos\'{e}, CA 95192\\[0.1in]
\tt{matthew.johnston@sjsu.edu}}
\date{}
\maketitle

\tableofcontents

\begin{abstract}
\small
The well-known Deficiency One Theorem gives structural conditions on a chemical reaction network under which, for any set of parameter values, the steady states of the corresponding mass action system may be easily characterized. It is also known, however, that mass action systems are not uniquely associated with reaction networks and that some representations may satisfy the Deficiency One Theorem while others may not. In this paper we present a mixed-integer linear programming framework capable of determining whether a given mass action system has a dynamically equivalent or linearly conjugate representation which has an underlying network satisfying the Deficiency One Theorem. This extends recent computational work determining linearly conjugate systems which are weakly reversible and have a deficiency of zero.

\end{abstract}

\noindent \textbf{Keywords:} chemical reaction networks; chemical kinetics; deficiency; linear programming; dynamical equivalence \newline \textbf{AMS Subject Classifications:} 80A30, 90C35

\bigskip

\section{Introduction}
\label{introduction}

A chemical reaction network is given by sets of reactants which interact according to fixed reaction channels to form new sets of reactants. Under suitable kinetic assumptions, such as spatial homogeneity and sufficient molecularity, these networks can be modeled by a system of autonomous polynomial ordinary differential equations known as a mass action system. The study of mass action systems, and the related area of chemical reaction network theory (CRNT), has been studied increasingly in recent years as the interdisciplinary area of systems biology has become more prominent.

A primary focus of CRNT is on the relationship between the topological structure of the network of interactions and the permissible dynamical behaviors of the corresponding reaction systems under a variety of kinetic assumptions. The canonical paper \cite{H-J1}, published in 1972, introduces the notion of complex-balancing in a reaction network and shows that this condition is sufficient to guarantee very strong asymptotic behavior of the corresponding mass action system. The concurrent papers \cite{F1} and \cite{H} introduce a nonnegative network parameter known as the deficiency and develop the now classical \emph{Deficiency Zero Theorem}. This theorem states that a network which is weakly reversible and has a deficiency of zero is complex balanced for all parameter values, and therefore the corresponding mass action systems possess the associated asymptotic behavior. Networks with a higher deficiency, and in particular a deficiency of one, have also been studied. The most well-known of these is the \emph{Deficiency One Theorem}, which gives conditions sufficient for the uniqueness of steady states of higher deficiency mass action systems \cite{Fe2,F2}. Other deficiency-based results characterizing the existence, number, and nature of steady states have also been derived \cite{Fe4,Boros2012,Sh-F,Ji}.

It is furthermore known that two chemical reaction networks can generate the same mass action system and therefore be dynamically equivalent. In such cases, one network may have a desirable network property such as weak reversibility or a low deficiency while another might not. Significant work has consequently been conducted on determining network representations of mass action systems with desirable structural properties. Mixed-integer linear programming (MILP) frameworks are now established for determining dynamically equivalent network structures which are linearly conjugate \cite{J-S4}, weakly reversible \cite{J-S4,Sz-H-T,Sz2015-3,Sz2014}, detailed and complex balanced \cite{Sz-H,J-S5}, reaction dense and reaction sparse \cite{Sz-H-P}, and have a minimal deficiency \cite{J-S6,Sz2015-2}. In particular, the question of whether a mass action system has a network representation satisfying the assumptions of the Deficiency Zero Theorem was answered in \cite{J-S6}. Many of these features are implemented in the computational package CRNreals \cite{S-B-A}.

One question which remains open is whether a given a mass action system has a dynamically equivalent or linearly conjugate representation satisfying the Deficiency One Theorem. To date, there exists no framework by which to check the following technical assumptions of the theorem: (a) that the sum of the deficiencies of each linkage classes is bounded by one and sums to the overall network deficiency; and (b) that each linkage class contains a single terminal strong linkage class. In this paper, we extend the MILP framework introduced in the papers outlined above to include conditions (a) and (b), and therefore determine whether a network has a representation satisfying the assumptions of the Deficiency One Theorem. We present examples of mass action systems and networks which, while not amenable to the Deficiency One Theorem directly, have dynamically equivalent and/or linearly conjugate systems which are amenable to it.

\section{Background}
\label{background}

In this section, we give the necessary terminology, notation, and background results relevant to the results contained in Section \ref{computational}.

\subsection{Chemical Reaction Networks}
\label{crnsection}

The following is the fundamental object of this paper.

\begin{definition}
\label{crn}
A chemical reaction network is a triple of sets $(\mathcal{S},\mathcal{C},\mathcal{R})$ where:
\begin{enumerate}
\item
The \textbf{species set} $\mathcal{S} = \{ X_1, \ldots, X_m \}$ consists of the elementary chemical species capable of undergoing chemical change.
\item
The \textbf{complex set} $\mathcal{C} = \{ C_1, \ldots, C_n \}$ consists of linear combinations of the species of the form
\[C_i = \sum_{j=1}^m y_{ij} X_j\]
where $y_{ij} \geq 0$ are the \textbf{stoichiometric coefficients}. We will let $y_i = (y_{i1},\ldots,y_{im}) \in \mathbb{R}_{\geq 0}^m$ denote the vector of stoichiometric coefficients corresponding to the complex $C_i \in \mathcal{C}$.
\item
The \textbf{reaction set} $\mathcal{R} = \{ R_1, \ldots, R_r \}$ consists of ordered pairs of complexes $(C_i,C_j) \in \mathcal{R}$. Reactions will also be represented as $C_i \to C_j \in \mathcal{R}$.
\end{enumerate}
We will assume that every reaction network satisfies the following: (i) every species is contained in at least one complex; and (ii) every complex is contained in at least one reaction.
\end{definition}
\noindent Note that we do not include the typical assumption that there are no self-reactions in the network (i.e. reactions of the form $C_i \to C_i$). The algorithm presented in Section \ref{computational} will require that we permit complexes which are isolated from every other complex (i.e. they are not connected to any other complex by any reaction). We will accommodate these isolated complexes by associating each such complex to a self-reaction. Such an allowance does not limited our ability to apply the Deficiency One Theorem (see Lemma \ref{noproblem}).

Associated with each chemical reaction network $(\mathcal{S},\mathcal{C},\mathcal{R})$ is a digraph $G(V,E)$ where $V = \mathcal{C}$ and $E = \mathcal{R}$. This digraph is known in the literature as the reaction graph of the network \cite{H-J1}. Many properties from graph theory have been studied in the context of chemical reaction networks which we now briefly introduce.

Two complexes $C_i$ and $C_j$ are said to be connected if there is a sequence of complexes such that $C_i = C_{\mu(1)} \leftrightarrow C_{\mu(2)} \leftrightarrow \cdots \leftrightarrow C_{\mu(l)} = C_j$ where $C_{\mu(k-1)} \leftrightarrow C_{\mu(k)}$ implies either $C_{\mu(k-1)} \leftarrow C_{\mu(k)}$ or $C_{\mu(k-1)} \rightarrow C_{\mu(k)}$. We say there is a path from $C_i$ to $C_j$ if there is a sequence of complexes such that $C_i = C_{\mu(1)} \rightarrow C_{\mu(2)} \rightarrow \cdots \rightarrow C_{\mu(l)} = C_j$. A subset of complexes $L \subseteq \mathcal{C}$ is called a linkage class if it is maximally connected. The set of linkage class of a network will be denoted $\mathcal{L} = \{ L_1, \ldots, L_{\ell} \}$. A subset of complexes $L \subseteq \mathcal{C}$ is called a strong linkage class if is maximally path-connected; that is, for every two complexes $C_i, C_j \in L$, $C_i \not= C_j$, a path from $C_i$ to $C_j$ implies a path from $C_j$ to $C_i$, but $C_k \not\in L$ implies either no path from $C_i$ to $C_k$ or no path from $C_k$ to $C_i$. A strong linkage class is called terminal if there is no reaction from a complex in the strong linkage class to a complex not in the strong linkage class. The set of terminal strong linkage classes will be denoted $\mathcal{T} = \{ T_1, \ldots, T_t \}$. A network is said to be weakly reversible if every linkage class is a strong linkage class.

To every reaction $C_i \to C_j \in \mathcal{R}$ there is an associated reaction vector $y_j - y_i \in \mathbb{R}^m$ which tracks the net gain/loss of each species as a result of a single instance of the reaction. The stoichiometric subspace of the network is given by:
\[S = \mbox{span} \{ (y_j-y_i) \in \mathbb{R}^m \; | \; C_i \to C_j \in \mathcal{R} \}.\]
The dimension of the stoichiometric subspace will be denoted $s = \mbox{dim}(S)$.

\subsection{Mass Action Systems}
\label{massactionsystems}

In order to determine how the species involved in a chemical reaction network evolve over time, it is necessary to make assumptions on the kinetics of the system. It is common to assume that the system is subject to the law of mass action, which states that the rate of a reaction is proportional to the product of the reactant concentrations. For example, a reaction of the form $X_1 + X_2 \to \cdots$ would have the rate $k \cdot x_1 x_2$ where $x_1 = [X_1]$ and $x_2 = [X_2]$ are the concentrations of $X_1$ and $X_2$, respectively. Although mass action kinetics is the most common kinetic form, others are commonly used in biochemistry, including Michaelis-Menten kinetics \cite{M-M} and Hill kinetics \cite{Hi}.

\begin{definition}
\label{mas}
Consider a chemical reaction network $(\mathcal{S},\mathcal{C},\mathcal{R})$. Let $\mathbf{x}(t) = (x_1(t),\ldots,x_m(t)) \in \mathbb{R}_{\geq 0}^m$ denote the vector of reactant concentrations at time $t \geq 0$, and $\mathcal{K} = \{ k(i,j) > 0 \; | \; C_i \to C_j \in \mathcal{R} \}$ denote a set of rate constants. Then the \textbf{mass action system} $(\mathcal{S},\mathcal{C},\mathcal{R},\mathcal{K})$ associated with the network $(\mathcal{S},\mathcal{C},\mathcal{R})$ and rate constant set $\mathcal{K}$ is given by
\begin{equation}
\label{de1}
\frac{d\mathbf{x}}{dt} = Y \cdot A(\mathcal{K}) \cdot \Psi(\mathbf{x}(t))
\end{equation}
where
\begin{enumerate}
\item
$Y \in \mathbb{Z}_{\geq 0}^{m \times n}$ is the \textbf{stoichiometric matrix} with entries $Y_{i,j} = y_{ji}$;
\item
$A(\mathcal{K}) \in \mathbb{R}^{n \times n}$ is the \textbf{Laplacian matrix} with entries
\[[A(\mathcal{K})]_{i,j} = \left\{ \begin{array}{ll} \displaystyle{-\sum_{l=1}^r k(i,l),} \; \; \; \; \; & \mbox{if } i = j \\ k(j,i), & \mbox{otherwise;} \end{array} \right.\]
\item
$\Psi(\mathbf{x}) \in \mathbb{R}_{\geq 0}^n$ is the vector with entries $\Psi_i(\mathbf{x}) = \displaystyle{\prod_{j=1}^m x_j^{y_{ij}}}$.
\end{enumerate}
\end{definition}

The form (\ref{de1}) emphasizes the connectivity structure of network. In particular, we have $[A(\mathcal{K})]_{ji} > 0$ for $i \not= j$ if and only if $C_i \to C_j \in \mathcal{R}$. That is, the distribution of zero and non-zero off-diagonal elements of $A(\mathcal{K})$ encode the structure of the network.

It can also be easily shown that (\ref{de1}) can be represented in the alternative, and somewhat more intuitive, form:
\begin{equation}
\label{de2}
\frac{d\mathbf{x}}{dt} = \sum_{C_i \to C_j \in \mathcal{R}} k(i,j) \; (y_j - y_i) \; \prod_{l=1}^m x_l^{y_{il}}.
\end{equation}
It follows immediately from (\ref{de2}) that $\mathbf{x}'(t) \in S$. Nonnegativity of solutions is also well-known so that $\mathbf{x}(t) \in \mathsf{C}_{\mathbf{x}_0}$ for all trajectories $\mathbf{x}(t)$ of (\ref{de1}), where $\mathsf{C}_{\mathbf{x}_0} = (\mathbf{x}_0 + S) \cap \mathbb{R}_{> 0}^m$ is the stoichiometric compatibility class associated with the initial condition $\mathbf{x}_0 \in \mathbb{R}_{> 0}^m$ \cite{V-H}.

\subsection{Dynamical Equivalence and Linear Conjugacy}
\label{dynamicalequivalence}

It is well-known that structurally distinct chemical reaction networks can generate the same mass action system (\ref{de1}) under the assumption of mass action kinetics \cite{H-J1,C-P,J-S2}. For example, consider the following edge-weighted networks where the weights correspond to the value of the rate constant:
\begin{equation}
\label{network1}
2X_1 \stackrel{1}{\longrightarrow} 2X_2 \stackrel{2}{\longrightarrow} X_1 + X_2
\end{equation}
and
\begin{equation}
\label{network2}
2X_1 \; \mathop{\stackrel{1}{\rightleftarrows}}_{1} \; 2X_2.
\end{equation}
It can easily be seen that both of these networks generate the system of differential equations $\dot{x}_1 = -\dot{x}_2 = -2x_1^2+2x_2^2$ under the assumption of mass action kinetics (\ref{de1}). The networks are therefore said to be dynamically equivalent. Note, however, that the connectivity properties of (\ref{network1}) and (\ref{network2}) are different. In particular, (\ref{network2}) is weakly reversible while (\ref{network1}) is not.

The notion of linear conjugacy of mass action systems was introduced in \cite{J-S2}. Two systems $(\mathcal{S},\mathcal{C},\mathcal{R},\mathcal{K})$ and $(\mathcal{S}^{\star},\mathcal{C}^{\star},\mathcal{R}^{\star},\mathcal{K}^{\star})$ are said to be linearly conjugate to one another if the trajectories $\mathbf{x}(t)$ and $\mathbf{x}^{\star}(t)$ of (\ref{de1}) are related by $x_i(t) = c_ix^{\star}_i(t)$, $i=1, \ldots, n$, for some constants $c_i > 0$. For example, the networks
\[2X_1 \stackrel{1}{\longrightarrow} X_1 + X_2, \; \; \; X_2 \stackrel{1}{\longrightarrow} X_1\]
and
\[2X^{\star}_1 \; \mathop{\stackrel{1/2}{\rightleftarrows}}_{1} \; X^{\star}_2\]
have the non-dynamically equivalent mass action systems $\dot{x}_1 = -\dot{x}_2 = -x_1^2+x_2$ and $\dot{x}^{\star}_1 = -2\dot{x}^{\star}_2 = -(x^{\star}_1)^2+2x^{\star}_2$, respectively. These systems are linearly conjugate to one another under the transformation $x_1(t) = x^{\star}_1(t)$ and $x_2(t) = 2x^{\star}_2(t)$. Linearly conjugate systems share may be qualitative properties, including the number and stability of positive steady states, and the properties of persistence and boundedness (see Lemma 3.2 of \cite{J-S2}).

Since dynamically equivalent systems are a subset of linearly conjugate systems taking $c_i = 1$ for all $i=1, \ldots, m$, in the rest of this paper we will only refer to the study of linearly conjugate systems. We will do this with the understanding that the systems studied may, in fact, be dynamically equivalent.

\subsection{Deficiency Theory}
\label{deficiencytheory}

The following network parameter was introduced in \cite{F1} and \cite{H}, and has been studied significantly since \cite{Fe2,F2,Sh-F,BorosThesis}.

\begin{definition}
\label{deficiency}
The deficiency of a chemical reaction network $(\mathcal{S},\mathcal{C},\mathcal{R})$ is given by $\delta = n - \ell - s$ where $n = |\mathcal{C}|$, $\ell = |\mathcal{L}|$, and $s = \mbox{dim}(S)$.
\end{definition}

\noindent The deficiency is a nonnegative integer which may be determined based on study of the network topology alone. That is, it is independent of the rate constants and even the assumption of mass action kinetics (e.g. Michaelis-Menten, Hill kinetics). Further connections between the deficiency, different rate forms, and the matrices $Y$ and $A_k$ from Definition \ref{mas} are well-known but will not be summarized here \cite{F1,Gunawardena,Arceo2015}.

The following classical result was first presented in \cite{F1,H,H-J1}.

\begin{theorem}[Deficiency Zero Theorem]
\label{dzt}
Consider a chemical reaction network $(\mathcal{S},\mathcal{C},\mathcal{R})$ which satisfies the following:
\begin{enumerate}
\item
the network is weakly reversible; and
\item
the deficiency is zero (i.e. $\delta = 0$).
\end{enumerate}
Then, for all rate constant sets $\mathcal{K}$ and initial conditions $\mathbf{x}_0 \in \mathbb{R}_{> 0}^m$, the corresponding mass action system $(\mathcal{S},\mathcal{C},\mathcal{R},\mathcal{K})$ has the property that there exists a unique positive steady state $\mathbf{x}^* \in \mathsf{C}_{\mathbf{x}_0}$ and this state is locally asymptotically stable with respect to $\mathsf{C}_{\mathbf{x}_0}$.
\end{theorem}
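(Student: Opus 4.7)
The plan is to follow the classical Horn–Jackson–Feinberg argument, which decomposes the result into two largely independent pieces: (i) that under the stated hypotheses the system is complex balanced at every choice of rate constants, and (ii) that complex balancing yields a Lyapunov function forcing uniqueness and local asymptotic stability in each stoichiometric compatibility class.

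The first step, and the main technical obstacle, is to establish that weak reversibility together with $\delta=0$ forces the existence of a positive $\mathbf{x}^* \in \mathbb{R}_{>0}^m$ satisfying $A(\mathcal{K})\,\Psi(\mathbf{x}^*) = 0$ for every admissible $\mathcal{K}$. I would proceed as follows. Since the network is weakly reversible, each linkage class is a single strong linkage class; by a matrix-tree / Perron-type argument applied to the restriction of $A(\mathcal{K})$ to each linkage class, $\ker A(\mathcal{K})$ is $\ell$-dimensional and admits a basis $\{b_1,\ldots,b_\ell\}$ of nonnegative vectors with $b_i$ strictly positive on $L_i$ and zero elsewhere. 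The task is then to find $\alpha_1,\ldots,\alpha_\ell > 0$ and $\mathbf{x}^* > 0$ so that $\Psi(\mathbf{x}^*) = \sum_i \alpha_i b_i$. Taking componentwise logarithms, this is equivalent to requiring that the vector $\big(\log\alpha_i + \log b_{i,j}\big)_{C_j \in L_i}$ lies in the image of $Y^{\top}$. A standard dimension count shows that $\dim(\operatorname{im} Y^{\top}) = s$ and that the "linkage-class-logarithm" target lives in an affine subspace of dimension $n-\ell$; the solvability of the resulting linear system for \emph{every} choice of the $b_i$ is exactly controlled by $\delta = n-\ell-s$, and $\delta=0$ makes these dimensions match so that a positive solution is always obtainable. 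This is the crux of the argument and is where all of the nontrivial linear algebra sits.

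Once complex balancing at some $\mathbf{x}^*\in \mathbb{R}_{>0}^m$ is in hand, I would introduce the pseudo-Helmholtz function
\begin{equation*}
H(\mathbf{x}) \;=\; \sum_{i=1}^m \Big( x_i \log \tfrac{x_i}{x_i^*} - x_i + x_i^* \Big),
\end{equation*}
which is strictly convex on $\mathbb{R}_{>0}^m$. A direct computation using the representation (\ref{de2}) together with the identity $A(\mathcal{K})\Psi(\mathbf{x}^*)=0$ and the elementary inequality $a - b - b\log(a/b) \ge 0$ (with equality iff $a=b$) applied complex-by-complex gives $\dot H(\mathbf{x}(t)) \le 0$, with equality precisely at complex balanced states. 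Hence $H$ serves as a Lyapunov function for $(\mathcal{S},\mathcal{C},\mathcal{R},\mathcal{K})$ relative to $\mathbf{x}^*$.

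For uniqueness, I would argue that if $\mathbf{x}^{**}\in \mathsf{C}_{\mathbf{x}_0}$ is any other positive steady state in the same stoichiometric compatibility class, then (by a well-known consequence of complex balancing) $\log \mathbf{x}^{**} - \log \mathbf{x}^*$ is orthogonal to $S$; since $\mathbf{x}^{**} - \mathbf{x}^* \in S$, strict monotonicity of the logarithm forces $\mathbf{x}^{**}=\mathbf{x}^*$. Finally, local asymptotic stability on $\mathsf{C}_{\mathbf{x}_0}$ follows because $H$, restricted to the compatibility class, attains a strict local minimum at $\mathbf{x}^*$ and is strictly decreasing along nonconstant trajectories in a neighborhood of $\mathbf{x}^*$. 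The hardest and most paper-specific step is the deficiency-zero dimension count in the first paragraph; everything after that is an application of the standard Horn–Jackson Lyapunov machinery.
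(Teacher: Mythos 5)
The paper does not prove this theorem: it is quoted as classical background (with citations to Feinberg, Horn, and Horn--Jackson), so there is no internal proof to compare against. Your proposal correctly reconstructs the standard Horn--Jackson--Feinberg argument in outline: weak reversibility gives the sign-structured basis of $\ker A(\mathcal{K})$, $\delta=0$ guarantees complex balancing for every $\mathcal{K}$, and the pseudo-Helmholtz function then delivers uniqueness and local asymptotic stability. Two points deserve tightening. First, your dimension count in the crux step is stated imprecisely: the target set $\bigl\{\log\bigl(\sum_i \alpha_i b_i\bigr) : \alpha_i>0\bigr\}$ is an affine subspace of dimension $\ell$ (parametrized by the $\log\alpha_i$), not $n-\ell$; the clean statement is that solvability for every choice of the $b_i$ is equivalent to $\operatorname{im}(Y^{\top}) + \operatorname{span}\{e_{L_1},\ldots,e_{L_\ell}\} = \mathbb{R}^n$, whose failure is measured exactly by $\delta = \dim(\ker Y \cap \Delta)$ where $\Delta$ is the span of the complex-space reaction vectors; $\delta=0$ closes the gap.

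Second, and more substantively, you establish the existence of \emph{some} complex balanced $\mathbf{x}^*\in\mathbb{R}^m_{>0}$ and then prove \emph{uniqueness} within each $\mathsf{C}_{\mathbf{x}_0}$, but the theorem asserts \emph{existence} of a positive steady state in \emph{every} stoichiometric compatibility class, and your $\mathbf{x}^*$ need not lie in the class of interest. The classical proof requires the separate ``quasi-thermostatic'' lemma: the steady state set is $E=\{\mathbf{x}>0 : \log\mathbf{x}-\log\mathbf{x}^*\in S^{\perp}\}$, and one must show $E\cap\mathsf{C}_{\mathbf{x}_0}\neq\emptyset$ for each $\mathbf{x}_0$, typically by minimizing $H$ over the compatibility class and ruling out boundary minimizers (or by Horn and Jackson's explicit intersection argument). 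Without this step the ``exists a unique'' claim is only half proved. The Lyapunov and uniqueness portions of your sketch are otherwise sound.
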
 

\noindent This result is surprising since, as just noted, the deficiency depends solely upon the network structure and not upon the choice of kinetics. The result, however, gives conclusions on the admissible dynamics to the corresponding mass action system (\ref{de1}); in fact, it gives very strong conclusions. It is also notable that the result holds independent of the rate constants and initial conditions; that is, it is robust to all of the system's parameter values.

In practice, many reaction networks arising from industrial chemistry and systems biology do not satisfy the assumptions of Theorem \ref{dzt}. The following result applies to many networks with a higher deficiency \cite{Fe2,F2}.

\begin{theorem}[Deficiency One Theorem]
\label{dot}
Consider a chemical reaction network $(\mathcal{S},\mathcal{C},\mathcal{R})$ with linkage classes $\mathcal{L} = \{ L_1, \ldots, L_{\ell} \}$. Let $\delta_{\theta}$ denote the deficiency of the subnetwork consisting only of the complexes and reactions in $L_{\theta}$ for $\theta=1, \ldots, \ell$. Suppose that:
\begin{enumerate}
\item
$\delta_{\theta} \leq 1$, for all $\theta=1,\ldots,\ell$,
\item
$\displaystyle{\sum_{\theta=1}^{\ell} \delta_{\theta} = \delta}$
\item
Every linkage class contains exactly one terminal strong linkage class (i.e. $t = \ell$).
\end{enumerate}
Then, if the mass action system $(\mathcal{S},\mathcal{C},\mathcal{R},\mathcal{K})$ admits a strictly positive steady state, every stoichiometric compatibility class contains exactly one steady state. Furthermore, if the network is weakly reversible, then $(\mathcal{S},\mathcal{C},\mathcal{R},\mathcal{K})$ admits a positive steady state for all choices of $\mathcal{K}$.
\end{theorem}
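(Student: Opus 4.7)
The plan is to follow Feinberg's original argument, splitting the result into a uniqueness statement and, under weak reversibility, an existence statement. Throughout the argument I would exploit the well-known structure of $\ker A(\mathcal{K})$, which decomposes as a direct sum of $t$ one-dimensional subspaces, each spanned by a vector with strictly positive support on the complexes of a single terminal strong linkage class. Hypothesis (3), $t = \ell$, ensures that this decomposition is indexed precisely by the linkage classes $L_\theta$.

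For the uniqueness portion, suppose $\mathbf{x}^{\ast}$ and $\mathbf{x}^{\ast\ast}$ are strictly positive steady states in the same stoichiometric compatibility class, and define $\mu \in \mathbb{R}^m$ by $\mu_i = \ln(x^{\ast\ast}_i / x^{\ast}_i)$. Both $\Psi(\mathbf{x}^{\ast})$ and $\Psi(\mathbf{x}^{\ast\ast})$ lie in $\ker A(\mathcal{K})$, and their components are related by $\Psi_i(\mathbf{x}^{\ast\ast}) = \Psi_i(\mathbf{x}^{\ast}) \, e^{(Y^T\mu)_i}$. Projecting onto the per-linkage-class summands of $\ker A(\mathcal{K})$ yields scalars $\alpha_\theta > 0$ for which $(Y^T\mu)_i = \ln \alpha_\theta$ on every complex of the terminal strong linkage class $T_\theta \subseteq L_\theta$. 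It then remains to promote this ``constant on each terminal strong linkage class'' condition to $Y^T\mu \in S^\perp$, since a standard log-convexity identity would then force $\mathbf{x}^{\ast\ast} = \mathbf{x}^{\ast}$.

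The deficiency hypotheses enter through a rank-counting argument. Let $S_\theta$ denote the stoichiometric subspace of the subnetwork on $L_\theta$; hypothesis (2) is equivalent to $S = \bigoplus_\theta S_\theta$ (a standard linear-algebraic reformulation of $\sum \delta_\theta = \delta$), so it suffices to verify $Y^T\mu \in S_\theta^\perp$ for each $\theta$ individually. For linkage classes with $\delta_\theta = 0$, the constancy of $Y^T\mu$ on $T_\theta$ combined with Theorem \ref{dzt}-style reasoning on the subnetwork gives the orthogonality immediately. The case $\delta_\theta = 1$ is the technical heart of the theorem: here $Y^T\mu$ may genuinely vary across the non-terminal complexes of $L_\theta$, but the existence of a single terminal strong linkage class in $L_\theta$, combined with the one-dimensional ``excess'' afforded by $\delta_\theta = 1$, forces the variation to respect the sign-pattern lemma underlying the Deficiency One Algorithm.

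For the existence claim under weak reversibility, I would apply a degree-theoretic fixed-point argument on the closure of the stoichiometric compatibility class $\mathsf{C}_{\mathbf{x}_0}$, using that weakly reversible mass action systems have no $\omega$-limit points on the boundary of $\mathbb{R}_{\geq 0}^m$ (the standard ``persistence boundary'' property for weakly reversible systems). The main obstacle is undoubtedly the $\delta_\theta = 1$ sign-pattern analysis in the uniqueness argument; this is what distinguishes the Deficiency One Theorem from an iterated application of the Deficiency Zero Theorem, and it is where Feinberg's original proof devotes most of its technical effort.
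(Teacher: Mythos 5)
First, note that the paper does not prove Theorem \ref{dot} at all: it is quoted as a classical result with citations to Feinberg \cite{Fe2,F2}, so there is no in-paper argument to compare yours against. Judged on its own, your outline has the right general silhouette (logarithmic difference $\mu$ of two steady states, reduction to $\mu \in S^{\perp}$, the inner-product identity $\sum_i \ln(x_i^{\ast\ast}/x_i^{\ast})(x_i^{\ast\ast}-x_i^{\ast}) = 0$ forcing equality), but it contains a fatal error at the very first step of the uniqueness argument. A positive steady state satisfies $Y \cdot A(\mathcal{K}) \cdot \Psi(\mathbf{x}^{\ast}) = \mathbf{0}$, \emph{not} $A(\mathcal{K}) \cdot \Psi(\mathbf{x}^{\ast}) = \mathbf{0}$; the latter is complex balancing. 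The two conditions coincide precisely when $\ker Y \cap \operatorname{im} A(\mathcal{K}) = \{\mathbf{0}\}$, which is what deficiency zero guarantees. Under the hypotheses of Theorem \ref{dot} this intersection is generically nontrivial, and the entire point of the theorem is to handle steady states that are \emph{not} complex balanced. Consequently $\Psi(\mathbf{x}^{\ast})$ and $\Psi(\mathbf{x}^{\ast\ast})$ need not lie in $\ker A(\mathcal{K})$, the projection onto the one-dimensional summands supported on terminal strong linkage classes is unavailable, and the scalars $\alpha_{\theta}$ with $(Y^T\mu)_i = \ln\alpha_{\theta}$ on $T_{\theta}$ do not exist in general. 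What Feinberg actually does is work with the vector $A(\mathcal{K})\Psi(\mathbf{x}^{\ast}) \in \ker Y \cap \operatorname{im} A(\mathcal{K})$, use the deficiency hypotheses to show this space is at most one-dimensional per linkage class, and then run a delicate sign/partial-order analysis on $Y^T\mu$ relative to the reaction digraph. Your outline defers exactly this step to ``the sign-pattern lemma underlying the Deficiency One Algorithm,'' which is both circular as a proof and a mischaracterization (the Deficiency One Algorithm is a separate, later construction for detecting multistationarity in regular deficiency-one networks, not the engine of this theorem's proof).

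The existence claim has a second genuine gap: the assertion that weakly reversible mass action systems have no $\omega$-limit points on $\partial\mathbb{R}^m_{\geq 0}$ is the persistence conjecture, which is open in general, not a ``standard property'' you may invoke. The existence proofs of Feinberg \cite{F2} and Boros \cite{BorosThesis,Boros2012} for this setting proceed by a fixed-point/degree argument whose boundary control comes from the specific deficiency-one structure (and, in Boros's treatment of multiple linkage classes, a careful parametrization of the steady-state set), not from dynamical persistence. As written, then, both halves of your argument rest on statements that are either false in the required generality or strictly stronger than what is known.
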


\noindent The conclusions of the Deficiency One Theorem are not as strong as the Deficiency Zero Theorem since it does not give any information about the stability of steady states; in fact, the steady states may be stable or unstable. It should be noted, however, that it is often very difficult to ascertain directly the uniqueness of steady states in compatibility classes and that the Deficiency One Theorem presents a parameter-free method for obtaining this property.

It is also worth noting that, despite the implication of the name, mechanisms satisfying the Deficiency One Theorem \emph{are not} required to have a deficiency of one. For example, it is permissible to have a network with a deficiency of two and two linkage classes with subnetwork deficiencies of one (i.e. $\delta_1 = 1$, $\delta_2 = 1$, and $\delta = \delta_1 + \delta_2 = 2$).

Further consequences of condition $2.$ of the Deficiency One Theorem were considered in \cite{BorosThesis}. The following result was shown.

\begin{theorem}[Corollary 3.5, \cite{BorosThesis}]
\label{boros}
Consider a chemical reaction network $(\mathcal{S},\mathcal{C},\mathcal{R})$ for which condition 2. of the Deficiency One Theorem holds. Then, if any stoichiometric compatibility class has a finite number of steady states, every stoichiometric compatibility class has the same finite number of steady states.
\end{theorem}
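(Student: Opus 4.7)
The plan is to exhibit, under condition 2 of the Deficiency One Theorem, a free group action on the positive steady-state set of $(\mathcal{S},\mathcal{C},\mathcal{R},\mathcal{K})$ that maps the steady states in any one stoichiometric compatibility class bijectively onto those in any other, whereupon the common cardinality is just the number of orbits meeting the steady-state set.

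First, I would reformulate condition 2 as the direct-sum statement $S = \bigoplus_{\theta=1}^\ell S_\theta$, where $S_\theta = \mbox{span}\{y_j - y_i : C_i, C_j \in L_\theta\}$. Indeed, $\sum_\theta \delta_\theta = \sum_\theta (n_\theta - 1 - \dim S_\theta) = n - \ell - \sum_\theta \dim S_\theta$ while $\delta = n - \ell - \dim S$, and since the inclusion $\sum_\theta S_\theta \subseteq S$ is automatically an equality, $\sum_\theta \delta_\theta = \delta$ is equivalent to $\dim S = \sum_\theta \dim S_\theta$, i.e.\ the sum is direct.

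Next, I would introduce the group $G := \exp(S^\perp) \subset \mathbb{R}_{>0}^m$ acting on $\mathbb{R}_{>0}^m$ by componentwise multiplication $(c,\mathbf{x}) \mapsto c \odot \mathbf{x}$, and show it preserves the positive steady-state set. For $c \in G$ one has $\Psi_i(c \odot \mathbf{x}) = c^{y_i} \Psi_i(\mathbf{x})$, and since $\ln c \in S^\perp \subseteq S_\theta^\perp$, the scalar $\alpha_\theta := c^{y_i}$ depends only on the linkage class $L_\theta$ containing $C_i$. Because nonzero off-diagonal entries of $A(\mathcal{K})$ correspond only to reactions, which stay within a single linkage class, $A(\mathcal{K}) \Psi(c \odot \mathbf{x}) = \tilde{D}\, w$ with $w := A(\mathcal{K})\Psi(\mathbf{x})$ and $\tilde{D}$ the diagonal matrix assigning $\alpha_\theta$ to each row indexed by $L_\theta$. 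Writing $u_\theta := \sum_{i \in L_\theta} y_i w_i$, we get $Y A(\mathcal{K}) \Psi(c \odot \mathbf{x}) = \sum_\theta \alpha_\theta u_\theta$. A short computation, using that each diagonal block of $A(\mathcal{K})$ has zero column sums so that $\sum_{i \in L_\theta} w_i = 0$, rewrites $u_\theta = \sum_{i \in L_\theta}(y_i - y_{i_0}) w_i \in S_\theta$ for any fixed $i_0 \in L_\theta$. If $\mathbf{x}$ is a steady state then $0 = Yw = \sum_\theta u_\theta$, and the direct-sum conclusion of the previous step forces each $u_\theta = 0$; hence $c \odot \mathbf{x}$ is also a steady state.

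Finally, I would invoke the standard toric parameterization (Birch's theorem in its CRN-theoretic form): for any $\mathbf{x}^* \in \mathbb{R}_{>0}^m$, the orbit $G \cdot \mathbf{x}^* = \{\mathbf{x} \in \mathbb{R}_{>0}^m : \ln \mathbf{x} - \ln \mathbf{x}^* \in S^\perp\}$ meets every stoichiometric compatibility class $\mathsf{C}_{\mathbf{x}_0}$ in exactly one point. Combined with the previous step, this means the positive steady-state set is a union of $G$-orbits, and each such orbit contributes exactly one steady state to every compatibility class. Hence the number of steady states is the same in every compatibility class, equal to the number of $G$-orbits intersecting the steady-state set, and finiteness transfers from any one class to all of them. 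The main obstacle is the preservation step: without $S = \bigoplus_\theta S_\theta$ the relation $\sum_\theta u_\theta = 0$ cannot be promoted to the linkage-class-wise vanishing $u_\theta = 0$, and the group action does not preserve steady states; pinning down $u_\theta \in S_\theta$ via the linkage-class block structure of $A(\mathcal{K})$ is the technical heart of the argument.
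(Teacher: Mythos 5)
The paper does not actually prove this statement: it is imported verbatim from Boros's thesis (Corollary 3.5 there), so there is no in-paper proof to compare against. Your argument is, however, a correct and essentially complete proof, and it follows the same line as the classical deficiency-one machinery underlying Boros's result. The two pillars are exactly right: (i) since each linkage class is connected, $S=\sum_{\theta}S_{\theta}$ always holds, so condition 2 is equivalent to the directness of that sum; and (ii) the block structure of $A(\mathcal{K})$ over linkage classes, together with the zero column sums of each block, puts $u_{\theta}=\sum_{i\in L_{\theta}}y_i w_i$ into $S_{\theta}$, whence directness upgrades $\sum_{\theta}u_{\theta}=0$ to $u_{\theta}=0$ for each $\theta$ and makes the positive equilibrium set invariant under the $\exp(S^{\perp})$-scaling. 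Combined with the standard Birch/Horn--Jackson fact that each coset $\{\mathbf{x}:\ln\mathbf{x}-\ln\mathbf{x}^*\in S^{\perp}\}$ meets each positive stoichiometric compatibility class exactly once, this gives a cardinality-preserving bijection between the equilibria of any two classes, which is exactly the assertion. You have also correctly identified where condition 2 is indispensable: without the direct sum, $\sum_{\theta}u_{\theta}=0$ does not force the individual $u_{\theta}$ to vanish, the scaling action no longer preserves equilibria, and different compatibility classes can indeed carry different numbers of steady states.
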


\noindent This result eliminates the possibility of bifurcations in the initial conditions resulting in different numbers of steady states. The result again depends only upon the structural information of the network, and not on the parameter values. It is often very challenging to ascertain this information through direct analysis of the differential equations (\ref{de1}).

\begin{example}
Consider the following chemical reaction network:
\begin{center}
\begin{tikzpicture}[auto, outer sep=3pt, node distance=2cm,>=latex']
\node (2X1) {$2X_1$};
\node [below of = 2X1, node distance = 2cm] (2X2) {$2X_2$};
\node [below of = 2X1, node distance = 1cm] (ghost) {};
\node [right of = ghost, node distance = 2.5cm] (X1X3) {$X_1+X_3$};
\node [right of = X1X3, node distance = 2.5cm] (X1X2) {$X_1+X_2$};
\node [below of = 2X2, node distance = 1cm] (X2X3) {$X_2+X_3$};
\node [right of = X2X3, node distance = 2.5cm] (X4) {$X_4$};
\node [right of = X4, node distance = 2.5cm] (X2X5) {$X_2+X_5$};
\path[->,bend left=10] (2X1) edge node {} (X1X3);
\path[->,bend left=10] (X1X3) edge node {} (2X1);
\path[->,bend left=10] (2X2) edge node {} (X1X3);
\path[->,bend left=10] (X1X3) edge node {} (2X2);
\path[->] (X1X3) edge node {} (X1X2);
\path[->,bend left=10] (X2X3) edge node {} (X4);
\path[->,bend left=10] (X4) edge node {} (X2X3);
\path[->,bend left=10] (X4) edge node {} (X2X5);
\path[->,bend left=10] (X2X5) edge node {} (X4);
\end{tikzpicture}
\end{center}
It can be quickly computed that that the stoichiometric space has dimension $s = \mbox{dim}(S) = 4$ so that the deficiency is $\delta = n - \ell - s = 7 - 2 - 4 = 1$. The deficiencies of the two linkage classes, enumerated in the order they appear above, are given by
\[\begin{split} \delta_1 & = n_1 - 1 - s_1 = (4) - 1 - (2) = 1 \\ \delta_2 & = n_2 - 1 - s_2 = (3) - 1 - (2) = 0.\end{split}\]
It follows that $\delta_1 \leq 1$, $\delta_2 \leq 1$, and $\delta = \delta_1 + \delta_2 =1$. The terminal strong linkage classes are $\{X_1 + X_2\}$ and $\{X_2 + X_3, X_4, X_2+X_5\}$ so that every linkage class only has a single terminal strong linkage class. The Deficiency One Theorem therefore applies so that, if the mass action system $(\mathcal{S},\mathcal{C},\mathcal{R},\mathcal{K})$ admits a positive steady state for some rate constant set $\mathcal{K}$, then every stoichiometric compatibility class has exactly one positive steady state.

\end{example}

\section{Main Results}
\label{computational}

In this section, we consider the question of whether, given a mass action system $(\mathcal{S},\mathcal{C},\mathcal{R},\mathcal{K})$, we can find a linearly conjugate system $(\mathcal{S}^{\star},\mathcal{C}^{\star},\mathcal{R}^{\star},\mathcal{K}^{\star})$ which satisfies the assumptions of the Deficiency One Theorem.

We will show that the answer is a definite yes. We present a MILP framework which checks whether conditions $1.$, $2.$ and $3.$ of the Deficiency One Theorem can be satisfied for a linearly conjugate system. The framework is an extension of recent work by the author and others on various problems within CRNT. The most directly applicable background paper is \cite{J-S6}, where the authors present a framework for determining whether a system is linearly conjugate to a system with an underlying network which satisfies the Deficiency Zero Theorem. We also take elements from the recent computational paper \cite{J2} which introduces a method for checking conditions on the connectivity of non-weakly reversible networks by relating them to a weakly reversible network.

We will at various points require the following background results from CRNT.

\begin{theorem}[Theorem 3.1 of \cite{G-H}; Proposition 4.1 of \cite{F3}]
\label{weaklyreversible}
Let $A(\mathcal{K})$ be the Laplacian matrix of a mass action system $(\mathcal{S},\mathcal{C},\mathcal{R},\mathcal{K})$ and let $\Lambda_{\theta}$, $\theta=1, \ldots, \ell,$ denote the support of the $\theta^{th}$ linkage class, $L_{\theta}$. Then the reaction graph corresponding to $A(\mathcal{K})$ is weakly reversible if and only if there is a basis of ker$(A(\mathcal{K}))$, $\left\{ \mathbf{w}^{(1)}, \ldots, \mathbf{w}^{(\ell)} \right\}$, such that, for $\theta=1, \ldots, \ell$,
\[\mathbf{w}^{(\theta)} = \left\{ \begin{array}{ll} w^{(\theta)}_j > 0, \hspace{0.3in} & j \in \Lambda_{\theta} \\ w^{(\theta)}_j = 0, & j \not\in \Lambda_{\theta}. \end{array} \right.\]
\end{theorem}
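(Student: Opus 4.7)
The plan is to deduce the result from the classical structural theorem for the kernel of the Laplacian of a weighted digraph. Specifically, by Kirchhoff's matrix--tree theorem (or equivalently, Perron--Frobenius theory for the reducible $M$-matrix $-A(\mathcal{K})$), one has $\dim \ker A(\mathcal{K}) = t$, the number of terminal strong linkage classes, and moreover $\ker A(\mathcal{K})$ is spanned by distinguished vectors $\{\mathbf{v}^{(\nu)}\}_{\nu=1}^{t}$ with each $\mathbf{v}^{(\nu)}$ strictly positive on $T_\nu$ and zero elsewhere. In particular, every element of $\ker A(\mathcal{K})$ is supported on $\bigcup_{\nu=1}^{t} T_\nu$. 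I would treat this structural result as a black box, since it is the classical content of the cited references. Combined with the elementary observation that every linkage class contains at least one terminal strong linkage class, so $t \geq \ell$ always holds, the theorem then reduces to dimension counting and tracking supports.

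For the forward direction, suppose the network is weakly reversible. Then by definition each $L_\theta$ is a single strong linkage class, and since reactions never cross linkage classes, $L_\theta$ has no outgoing reactions to complexes outside itself, hence is terminal. Thus $T_\theta = L_\theta$ and $t = \ell$. The black-box theorem then produces the required basis by setting $\mathbf{w}^{(\theta)} := \mathbf{v}^{(\theta)}$, which is strictly positive on $\Lambda_\theta$ and zero on its complement.

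For the reverse direction, suppose a basis $\{\mathbf{w}^{(1)}, \ldots, \mathbf{w}^{(\ell)}\}$ of $\ker A(\mathcal{K})$ with the stated support pattern exists. Its cardinality gives $\dim \ker A(\mathcal{K}) \geq \ell$, and combined with $\dim \ker A(\mathcal{K}) = t$ and $t \geq \ell$, I obtain $t = \ell$. Hence each linkage class $L_\theta$ contains exactly one terminal strong linkage class $T_\theta$. Since $\mathbf{w}^{(\theta)} \in \ker A(\mathcal{K})$, its support $\Lambda_\theta$ must lie in $\bigcup_\nu T_\nu$; intersecting with $L_\theta$ (which contains only $T_\theta$ among the terminal SLCs), this gives $\Lambda_\theta \subseteq T_\theta$. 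But also $T_\theta \subseteq L_\theta = \Lambda_\theta$, forcing $\Lambda_\theta = T_\theta = L_\theta$. Thus each linkage class coincides with its unique terminal strong linkage class, which is the definition of weak reversibility.

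The principal obstacle is really just the clean invocation of the structural theorem for the digraph Laplacian kernel; once it is in hand, the rest of the argument is a short dimension-count and a chase of set inclusions. A secondary technical point is the presence of complexes with no outgoing reactions (singleton terminal SLCs) or self-reactions introduced for isolated complexes, but these are absorbed into the same black-box theorem without modification because the theorem only requires positive edge weights on whatever reactions do exist.
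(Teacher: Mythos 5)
The paper does not actually prove this theorem: it imports it verbatim from the cited references (Theorem 3.1 of Gatermann--Huber and Proposition 4.1 of Feinberg's lectures), so there is no in-paper argument to compare against. Your proof is correct and follows the standard route those references take: reduce everything to the classical structural theorem that $\ker A(\mathcal{K})$ has dimension $t$ and is spanned by vectors that are strictly positive exactly on the terminal strong linkage classes, then finish with a dimension count and a chase of supports. One small logical slip in the reverse direction: a basis of cardinality $\ell$ gives $\dim \ker A(\mathcal{K}) = \ell$ exactly, not merely $\dim \ker A(\mathcal{K}) \geq \ell$, and it is this equality together with $\dim \ker A(\mathcal{K}) = t$ that forces $t = \ell$; the chain of inequalities as you wrote it ($\dim \ker A(\mathcal{K}) \geq \ell$ and $t \geq \ell$) does not close to an equality. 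With that one-word fix the argument is complete. Your closing remark about self-loops and isolated complexes is also right: a self-loop contributes nothing net to the Laplacian, and an isolated complex is simultaneously its own linkage class and its own terminal strong linkage class, contributing a standard basis vector to the kernel, so the black-box theorem applies unchanged.
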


\begin{theorem}[Lemma 4.1, \cite{F1}]
\label{theorem1}
Let $\mathcal{L} = \{ L_1, \ldots, L_{\ell} \}$ denote the linkage classes of a chemical reaction network $(\mathcal{S},\mathcal{C},\mathcal{R})$. Then
\[S = \bigcup_{\theta=1}^{\ell} \mbox{span} \left\{ y_j - y_i \; | \; C_i, C_j \in L_{\theta}\right\}.\]
\end{theorem}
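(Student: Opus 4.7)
The claim has a clean two-sided inclusion structure, so the plan is to prove each inclusion separately. For notational convenience, I set $S_{\theta} := \mbox{span}\{y_j - y_i \mid C_i, C_j \in L_{\theta}\}$ for $\theta = 1, \ldots, \ell$, so that the desired equality reads $S = \bigcup_{\theta = 1}^{\ell} S_{\theta}$.

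For the inclusion $\bigcup_{\theta=1}^{\ell} S_{\theta} \subseteq S$, the key observation is that each generator $y_j - y_i$ of $S_{\theta}$ (with $C_i, C_j \in L_{\theta}$) lies in $S$, by a telescoping argument based on the path-connectedness of linkage classes. Specifically, by the definition of a linkage class given in Section \ref{crnsection}, there exists a sequence $C_i = C_{\mu(1)} \leftrightarrow C_{\mu(2)} \leftrightarrow \cdots \leftrightarrow C_{\mu(l)} = C_j$ whose consecutive pairs are connected by a reaction in either direction. Writing
\[y_j - y_i = \sum_{k=2}^{l} \bigl( y_{\mu(k)} - y_{\mu(k-1)} \bigr),\]
each summand equals $\pm(y_{b} - y_{a})$ for some reaction $C_a \to C_b \in \mathcal{R}$, and therefore belongs to $S$. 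Hence $y_j - y_i \in S$, and since $S$ is a subspace, $S_{\theta} \subseteq S$ for each $\theta$.

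The reverse inclusion $S \subseteq \bigcup_{\theta=1}^{\ell} S_{\theta}$ is more immediate. Any generator of $S$ has the form $y_j - y_i$ for some reaction $C_i \to C_j \in \mathcal{R}$. Since a single reaction already witnesses connectivity, both $C_i$ and $C_j$ lie in a common linkage class $L_{\theta}$, so $y_j - y_i$ is one of the defining generators of $S_{\theta}$ and thus lies in $\bigcup_{\theta} S_{\theta}$. Combining the two inclusions yields the stated equality.

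The only nontrivial step is the telescoping in the first inclusion; the main thing to be careful about is handling the orientation of each edge $C_{\mu(k-1)} \leftrightarrow C_{\mu(k)}$, since only one of the two possible reactions between a linked pair need be present in $\mathcal{R}$. This is handled uniformly by absorbing a sign into each telescoped difference, which is why the argument produces an element of the subspace $S$ regardless of the direction of the underlying reactions.
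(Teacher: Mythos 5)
The paper itself offers no proof of this statement --- it is imported verbatim from \cite{F1} --- so the only question is whether your argument is internally sound, and there is a genuine gap in your second inclusion. You show that every generator $y_j - y_i$ of $S$ (arising from a reaction $C_i \to C_j$) lies in some $S_{\theta}$, and from this conclude $S \subseteq \bigcup_{\theta} S_{\theta}$. That inference is invalid: $\bigcup_{\theta} S_{\theta}$ is a union of subspaces, which is not itself a subspace (it is not closed under addition of vectors taken from different $S_{\theta}$), so containing the generators of $S$ does not imply containing their span. Concretely, for a network with two linkage classes $C_1 \to C_2$ and $C_3 \to C_4$ whose reaction vectors are linearly independent, the vector $(y_2 - y_1) + (y_4 - y_3)$ lies in $S$ but in neither $S_1$ nor $S_2$. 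In fact a finite union of subspaces of a real vector space can only equal $S$ if one of the $S_{\theta}$ already equals $S$, so the identity as printed is false except in degenerate cases.

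The resolution is that the displayed $\bigcup$ is a misprint for the subspace sum $\sum$: Feinberg's Lemma 4.1 asserts $S = S_1 + \cdots + S_{\ell}$, and this is the version the paper actually relies on later, e.g.\ when enforcing $\sum_{\theta} s_{\theta} = s$ is taken to yield $\sum_{\theta} \delta_{\theta} = \delta$ (a statement about when that sum of subspaces is direct). With $\bigcup$ replaced by $\sum$, your proof is correct and complete: the telescoping along a connecting chain inside a linkage class, with a sign absorbing the orientation of each edge, gives $S_{\theta} \subseteq S$ for each $\theta$ and hence $\sum_{\theta} S_{\theta} \subseteq S$, and the reverse inclusion becomes genuinely immediate because $\sum_{\theta} S_{\theta}$ is a subspace containing every reaction vector. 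You should flag the union/sum discrepancy explicitly rather than silently ``proving'' the union statement; as literally stated the claim cannot be established because it is not true.
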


\begin{definition}
\label{kineticsubspace}
The \textbf{kinetic subspace} of a mass action system $(\mathcal{S},\mathcal{C},\mathcal{R},\mathcal{K})$ is the smallest subspace of $\mathbb{R}^m$ which contains $\mbox{im}(Y \cdot A(\mathcal{K}) \cdot \Psi(\mathbf{x}(t))$.
\end{definition}

\begin{theorem}[Unnumbered Theorem, \cite{H-F}]
\label{feinberghorn}
Consider a chemical reaction network $(\mathcal{S},\mathcal{C},\mathcal{R})$ for which every linkage class contains exactly one terminal strong linkage class (i.e. $t = \ell$). Then, regardless of the choice of $\mathcal{K}$, the dimension of the kinetic subspace of the mass action system $(\mathcal{S},\mathcal{C},\mathcal{R},\mathcal{K})$ is equal to $s = \mbox{dim}(S)$.
\end{theorem}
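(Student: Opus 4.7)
The plan is to reduce the statement to a dimension-count for two related subspaces of $\mathbb{R}^n$ and then exploit the equality $t=\ell$. The first step is to rewrite the kinetic subspace in a form that separates the dynamical data $A(\mathcal{K})$ from the monomial vector $\Psi(\mathbf{x})$. Since the complexes $y_1,\ldots,y_n$ are pairwise distinct, the monomials $\mathbf{x}^{y_i}$ are linearly independent as functions on $\mathbb{R}_{>0}^m$, so evaluating $\Psi$ at sufficiently many points yields a spanning set of $\mathbb{R}^n$. Hence the smallest subspace of $\mathbb{R}^m$ containing the image of $Y\cdot A(\mathcal{K})\cdot\Psi(\mathbf{x})$ is exactly
\[
K \;=\; Y\cdot \mathrm{im}\bigl(A(\mathcal{K})\bigr).
\]
Comparing with $S$ (to be rewritten similarly in the next step), the problem becomes a statement about $\mathrm{im}(A(\mathcal{K}))$ versus the image of the incidence matrix of the reaction graph.

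Next I would introduce the $n\times r$ incidence matrix $I$ whose column corresponding to a reaction $C_i\to C_j$ is $e_j-e_i$. By Theorem \ref{theorem1} and the fact that each undirected edge between two complexes in the same linkage class is a $\mathbb{Z}$-combination of directed edges along an undirected path, one obtains $\mathrm{im}(I)=\bigoplus_{\theta=1}^{\ell} V_\theta$ with $V_\theta=\mathrm{span}\{e_j-e_i : C_i,C_j\in L_\theta\}$, and consequently
\[
S \;=\; Y\cdot \mathrm{im}(I), \qquad \dim\mathrm{im}(I) \;=\; \sum_{\theta=1}^{\ell}(n_\theta-1) \;=\; n-\ell.
\]
From the definition of $A(\mathcal{K})$, its $i^{\text{th}}$ column equals $\sum_{C_i\to C_j\in\mathcal{R}} k(i,j)(e_j-e_i)$, so $A(\mathcal{K})=I\cdot W$ for a suitable $r\times n$ matrix $W$ of rate constants. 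Therefore $\mathrm{im}(A(\mathcal{K}))\subseteq \mathrm{im}(I)$ unconditionally.

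The heart of the argument is then a dimension count for $\mathrm{im}(A(\mathcal{K}))$. I would invoke the standard fact about weighted graph Laplacians of digraphs (which also underlies Theorem \ref{weaklyreversible}): $\dim\ker(A(\mathcal{K}))$ equals the number $t$ of terminal strong linkage classes, and moreover there is a nonnegative basis of $\ker(A(\mathcal{K}))$ whose $\theta^{\text{th}}$ element is supported on $T_\theta$. This gives $\dim\mathrm{im}(A(\mathcal{K}))=n-t$.

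With these pieces in place, the conclusion is immediate. Under the hypothesis $t=\ell$, the two subspaces
\[
\mathrm{im}(A(\mathcal{K})) \;\subseteq\; \mathrm{im}(I)
\]
both have dimension $n-\ell$, so they coincide. Applying $Y$ to both sides yields $K=Y\cdot\mathrm{im}(A(\mathcal{K}))=Y\cdot\mathrm{im}(I)=S$, so in particular $\dim K=\dim S=s$. The only step I expect to require any real care is the Laplacian-kernel dimension claim; although standard, it is the place where the terminal-strong-linkage-class structure enters, and it is precisely this structural input that forces the two images to have equal dimension when $t=\ell$.
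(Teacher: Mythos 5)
Your argument is correct. Note that the paper itself gives no proof of this statement; it is quoted as a background result from the cited reference \cite{H-F}, and your proof is essentially the standard one from that source: identify the kinetic subspace as $Y\cdot\mathrm{im}(A(\mathcal{K}))$ (using linear independence of the distinct monomials $\mathbf{x}^{y_i}$ on $\mathbb{R}_{>0}^m$), identify $S$ as $Y\cdot\mathrm{im}(I)$ with $\dim\mathrm{im}(I)=n-\ell$, observe $\mathrm{im}(A(\mathcal{K}))\subseteq\mathrm{im}(I)$, and close the dimension gap via $\dim\ker(A(\mathcal{K}))=t$. That last fact is indeed the only nontrivial input; it is the general (not merely weakly reversible) form of the Laplacian kernel structure underlying Theorem \ref{weaklyreversible} and holds for any weighted digraph Laplacian, so your proof is complete as written. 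One small caveat specific to this paper's conventions: the author permits self-loops $C_i\to C_i$, under which the stated entries of $A(\mathcal{K})$ no longer give a genuine Laplacian column summing to zero; your argument (like the original theorem) implicitly assumes no self-reactions, which is harmless here since Lemma \ref{noproblem} handles the isolated-complex bookkeeping separately.
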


\begin{lemma}
\label{noproblem}
Let $(\mathcal{S},\mathcal{C},\mathcal{R})$ denote a chemical reaction network. Suppose $C'$ is a complex not contained in $\mathcal{C}$ and $R'$ is the self-loop $C' \to C'$. Then $(\mathcal{S},\mathcal{C},\mathcal{R})$ satisfies the Deficiency One Theorem if and only if $(\mathcal{S},\mathcal{C} \cup C',\mathcal{R}\cup R')$ satisfies the Deficiency One Theorem.
\end{lemma}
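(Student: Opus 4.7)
The plan is to verify each of the three conditions of the Deficiency One Theorem separately, by tracking how the addition of the isolated complex $C'$ (with its self-loop $R'$) perturbs the relevant combinatorial invariants $n$, $\ell$, $s$, $\delta$, $\{\delta_\theta\}$, and $t$. Since $C'$ is attached to the rest of the network by no edges other than the self-loop, $\{C'\}$ is an entire linkage class by itself in $(\mathcal{S},\mathcal{C}\cup C',\mathcal{R}\cup R')$. So $n$ increases by $1$ and $\ell$ increases by $1$. The reaction vector of the self-loop is $y_{C'}-y_{C'}=0$, hence the stoichiometric subspace $S$ is unchanged and $s$ is unchanged. Consequently the overall deficiency satisfies $\delta_{\text{new}} = (n+1) - (\ell+1) - s = n-\ell-s = \delta$.

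Next, for the linkage-class deficiencies, the old linkage classes $L_1,\ldots,L_\ell$ retain the same complex sets, the same reactions, and therefore the same individual deficiencies $\delta_1,\ldots,\delta_\ell$. The new linkage class $L_{\ell+1}=\{C'\}$ contains one complex and its reactions span a trivial subspace, so $\delta_{\ell+1} = 1 - 1 - 0 = 0$. Thus condition 1 of the Deficiency One Theorem (i.e.\ $\delta_\theta\le 1$ for all $\theta$) holds for the augmented network if and only if it held for the original, and condition 2 (i.e.\ $\sum_\theta \delta_\theta = \delta$) gains a zero summand on each side and so is also equivalent.

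Finally, for condition 3, I must argue that $\{C'\}$ is itself a terminal strong linkage class of the augmented network and that no terminal strong linkage class of $(\mathcal{S},\mathcal{C},\mathcal{R})$ is affected by the addition. The singleton $\{C'\}$ is trivially strongly path-connected (the universal quantification over distinct pairs is vacuous), and it is maximal because $C'$ has no reaction to or from any other complex. It is terminal because its only outgoing edge is the self-loop $C'\to C'$, which stays inside the class. Meanwhile, the old terminal strong linkage classes $T_1,\ldots,T_t$ are determined entirely by the reaction subgraph on $\mathcal{C}$, which has not changed. Hence $t_{\text{new}} = t+1$ and $\ell_{\text{new}}=\ell+1$, so $t_{\text{new}} = \ell_{\text{new}}$ if and only if $t = \ell$.

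Combining these observations, each of the three conditions of Theorem \ref{dot} is preserved in both directions by the insertion or deletion of an isolated self-loop complex, which yields the equivalence claimed. The argument is largely bookkeeping; the only point requiring a bit of care is checking that $\{C'\}$ genuinely qualifies as a terminal strong linkage class under the definitions given in Section \ref{crnsection} despite containing only a single complex, which follows immediately because the nontrivial conditions in those definitions are quantified over pairs of distinct complexes and over reactions leaving the class, both of which are vacuous or satisfied here.
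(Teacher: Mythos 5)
Your proof is correct and follows essentially the same route as the paper's: both track how $n$, $\ell$, $s$, the linkage-class deficiencies, and the terminal strong linkage classes change under the addition of the isolated complex, observe that the self-loop contributes a zero reaction vector and that $\{C'\}$ is its own linkage class with deficiency $0$ and a single terminal strong linkage class, and conclude that all three hypotheses of the Deficiency One Theorem are preserved in both directions. Your treatment of condition 3 (checking that $\{C'\}$ qualifies as a terminal strong linkage class and that the old ones are unaffected) is slightly more explicit than the paper's, but the argument is the same.
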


\begin{proof}
Suppose $(\mathcal{S},\mathcal{C},\mathcal{R})$ satisfies the Deficiency One Theorem. The network $(\mathcal{S},\mathcal{C}\cup C',\mathcal{R} \cup R')$ differs from $(\mathcal{S},\mathcal{C},\mathcal{R})$ only in the isolated complex $C'$, which is its own linkage class, and the self-reaction $R'$, which does not affect the stoichiometric subspace. It follows that $n' = n+1$, $\ell' = \ell +1$, and $s'=s$ where primes are used to denote the network qualitities associated with $(\mathcal{S},\mathcal{C}\cup C',\mathcal{R} \cup R')$. We give the isolated complex linkage class the index $\ell' = \ell +1$. It follows that the linkage classes deficiencies coincide for $\theta = 1, \ldots, \ell$, and that the isolate linkage class gives $\delta_{\ell'} = n_{\ell'} - 1 - s_{\ell'} = (1) - 1 - (0) = 0 \leq 1$. We furthermore have that $\delta = n - \ell - s = (n+1) - (\ell + 1) - s = n' - \ell' - s' = \delta'$ so that $\delta = \sum_{\theta = 1}^{\ell} \delta_{\theta}= \sum_{\theta = 1}^{\ell'}\delta'_{\theta} = \delta'$ where $\delta_{\theta} = \delta'_{\theta}$ for $\theta=1, \ldots, \ell$, and $\delta'_{\ell'} = \delta'_{\ell+1} = 0$. The isolated complex linkage class clearly also  contains a single terminal strong linkage class, corresponding to the complex itself. It follows that $(\mathcal{S},\mathcal{C}\cup C',\mathcal{R} \cup R')$ satisfies the Deficiency One Theorem. The argument holds in reverse, so that the result is slown.
\end{proof}

\noindent Notice that Lemma \ref{noproblem} may be extended to networks with an arbitrary number of isolated complexes added to or removed from a given network.

\subsection{Mixed-Integer Linear Programming Framework}
\label{optimization}

A MILP problem may be written in the form
\begin{equation}
\label{milp}
\begin{split}
\mbox{minimize} \; \; &\mathbf{c} \cdot \mathbf{x} \\
\mbox{subject to} \; \; & \left\{ \begin{array}{l} A_1 \cdot \mathbf{x} = \mathbf{b}_1 \\ 
A_2 \cdot \mathbf{x} \leq \mathbf{b}_2 \\
x_i \mbox{ is an integer for } i \in I, I \subseteq \{ 1, \ldots, n\} \end{array} \right.
\end{split}
\end{equation}
where $\mathbf{x} \in \mathbb{R}^n$ is a vector of unknown decision variables and $\mathbf{c} \in \mathbb{R}^n$, $\mathbf{b}_1 \in \mathbb{R}^{p_1}$, $\mathbf{b}_2 \in \mathbb{R}^{p_2}$, $A_1 \in \mathbb{R}^{p_1 \times n}$, and $A_2 \in \mathbb{R}^{p_2 \times n}$ are vectors and matrices of known parameters values \cite{Sz2}.

If all of the decision variables in the problem are real-valued then the problem (\ref{milp}) can be solved in polynomial time. If any of the variables are required to be integer-valued, however, (\ref{milp}) becomes NP-hard. The development of algorithms for efficiently solving MILP problems is a major area of current work which we do not summarize here. For the work in this paper, we utilize the non-commercial software packages GNU Linear Program Kit (GLPK) \cite{Makhorin2010} and SCIP \cite{Achterberg2009}.

\subsection{Initialization of Program}
\label{initialization}

We now set up the objectives of our MILP problem.

Consider two mass action systems $(\mathcal{S},\mathcal{C},\mathcal{R},\mathcal{K})$ and $(\mathcal{S}^{\star},\mathcal{C}^{\star},\mathcal{R}^{\star},\mathcal{K}^{\star})$. We will refer to the former as the \emph{original system} and the latter as the \emph{target system}. We wish to determine a network structure and associated rate constants for the target network consistent with the requirements of linear conjugacy and either the Deficiency One Theorem or Theorem \ref{boros}. We will suppose that the two networks have common species and complex sets (i.e. $\mathcal{S} = \mathcal{S}^{\star}$ and $\mathcal{C} = \mathcal{C}^{\star}$), that $\mathcal{R}$ and $\mathcal{K}$ are a priori known, and that $\mathcal{R}^{\star}$ and $\mathcal{K}^{\star}$ are to be determined. Notice that the first condition implies that the two  systems have a common matrix $Y$ and vector $\Psi(\mathbf{x})$. We will assume a priori that the following quantities are known:
\begin{itemize}
\item[--]
A complex matrix $Y \in \mathbb{R}_{\geq 0}^{m \times n}$ which is common to both the original and target systems.
\item[--]
Either the reaction structure $\mathcal{R}$ and rate constant set $\mathcal{K}$ of the original system or the numerical values of $M = Y \cdot A(\mathcal{K})$ corresponding to the coefficient map of $\Psi(\mathbf{x})$ in the mass action system (\ref{de1}). Notice that $M$ may be determined from $Y$ and $\mathcal{K}$ if they are known.
\item[--]
The dimension of the kinetic space of the original system, which we will denote by $s$. This value is easily computed as the rank of the matrix $M$. The equivalence of the kinetic subspace of the original network and stoichiometric subspace of the target system follows from condition $3.$ of the Deficiency One Theorem and Theorem \ref{feinberghorn}.
\item[--]
A small parameter $\epsilon > 0$.
\item[--]
A set of random variables $\delta[i,j]$ chosen uniformly from the range $[\sqrt{\epsilon},1/\sqrt{\epsilon}]$.
\end{itemize}

\noindent Since we do not know how many linkage classes the target system will contain, but must know this to calculate the deficiency by Definition \ref{deficiency}, we will use the upper bound $n-s$ (see \cite{J-S6}). We will also allow the networks to contain unused complexes and note that, if a complex does not appear in the target system, it may be treated as a single isolated linkage class with a self-reaction. By Lemma \ref{noproblem}, this does not affect our ability to apply the Deficiency One Theorem.

\subsection{Implementing Dynamical Equivalence and Linear Conjugacy}
\label{implementing1}

In this section, we introduce constraint sets which guarantee that the original and target systems are linearly conjugate to one another. A full description of the process can be found in \cite{J-S2} and \cite{J-S4}.

We introduce the following decision variables:
\begin{flalign}
\tag{\textbf{Dec1}}
\label{dec1}
&
\left\{ \; \; \; \begin{array}{ll} d[i] > \epsilon, & \; i=1,\ldots, m \\ b[i,j] \in [0, \frac{1}{\epsilon}], & \; i,j =1, \ldots, n, \; i \not= j. \end{array} \right.
&
\end{flalign}
where the $d[i]$ correspond to the reciprocals of the conjugacy constants $c_i$ (i.e. $d[i] = 1/c_i$) and the $b[i,j]$ correspond to scalings of the rate constants $k^{\star}(i,j)$ of the target network.

To see why we track $d[i]$ and $b[i,j]$, rather than $c_i$ and $k^{\star}(i,j)$, respectively, we briefly reproduce the arguments of \cite{J-S2}. We start by defining $T = \mbox{diag} \{ \mathbf{c} \}$. We can write the conjugacy transformation as $\mathbf{x} = T \cdot \mathbf{x}^{\star}$ so that the two networks are linearly conjugate for rate constant set $\mathcal{K}^{\star}$ if
\[Y \cdot A(\mathcal{K}) \cdot \Psi(\mathbf{x}) = T \cdot Y \cdot A(\mathcal{K}^{\star}) \cdot \Psi(\mathbf{x}^{\star}) = T \cdot A(\mathcal{K}^{\star}) \cdot \mbox{diag} \left\{ \Psi(\mathbf{c}) \right\} \cdot \Psi(\mathbf{x}).\]
We can simplify this expression by removing $\Psi(\mathbf{x})$ and making the substitutions $A(\mathcal{B}) = A(\mathcal{K}^{\star}) \cdot \mbox{diag} \left\{ \Psi(\mathbf{c}) \right\}$ and $M = Y \cdot A(\mathcal{K})$ (see Theorem 2 of \cite{J-S4}). After inverting $T$, we can see that the two networks are linearly conjugate if they satisfy the following linear constraint set:
\begin{flalign}
\tag{\textbf{LC}}
\label{linearlyconjugate}
&
\left\{ \; \; \; \begin{array}{l} \\[-0.1in] Y \cdot A(\mathcal{B}) = T^{-1} \cdot M. \\[0.05in] \end{array}\right.
&
\end{flalign}
$Y$ and $M$ are a priori known while $A(\mathcal{B})$ and $T^{-1}$ contain the unknown decision variables $b[i,j]$ and $d[i]$ (since $T^{-1} = \mbox{diag} \{ 1/\mathbf{c} \}$). Notice that the conjugacy constants can be obtained from $c_i = 1/d[i]$ and the rate constant set $\mathcal{K}^{\star}$ can then be recovered by the equation
\[A(\mathcal{K}^{\star}) = A(\mathcal{B}) \cdot \left[ \mbox{diag} \left\{ \Psi(\mathbf{c}) \right\} \right]^{-1}.\]
The distribution of positive and zero elements for $b[i,j]$ and $k^{\star}[i,j]$ coincide so that the two networks correspond to the same network structure $\mathcal{R}^{\star}$.


\subsection{Implementing Deficiency Conditions}
\label{deficiencyconditions}

In this section, we incorporate conditions $1.$ and $2.$ of the Deficiency One Theorem into a mixed integer linear programming framework. To accommodate Definition \ref{deficiency}, we need to be able to track the linkage classes and stoichiometric subspaces of both the original and target systems. We introduce the following decision variables:
\begin{flalign}
\tag{\textbf{Dec2}}
\label{dec2}
&
\left\{ \; \; \; \begin{array}{ll}
\Lambda[i,\theta] \in \{ 0, 1 \}, & \; i=1, \ldots, n, \; \theta = 1, \ldots n-s \\
\Gamma[i,j,\theta] \in \{ 0, 1 \}, & \; i,j=1,\ldots, n, \; i \not= j, \; \theta = 1, \ldots, n-s \\
S[i,j,\theta] \geq 0, & \; i,j=1,\ldots, n, \; i \not= j, \; \theta = 1, \ldots, n-s \\
S'[i,j,\theta] \in \{ 0, 1 \}, & \; i,j=1,\ldots, n, \; i \not= j, \; \theta = 1, \ldots, n-s \\
L[\theta] \in [0,1], & \; \theta = 1, \ldots, n-s .
\end{array} \right.
&
\end{flalign}

\noindent \emph{Counting linkage classes:} We will count the linkage classes of the target network using the techniques presented in \cite{J-S6}. To track the linkage classes of the target system, we desire the following logical equivalences:
\[\begin{split}
& \Lambda[i,\theta] = 1 \; \Longleftrightarrow \; C_i \in L^{\star}_{\theta}\\
& L[\theta] = 1 \; \Longleftrightarrow \; L^{\star}_{\theta} \not= \emptyset
\end{split}\]
where $L_{\theta}^{\star}, \theta =1, \ldots, n-s,$ are the linkage classes of the target system. We can accomplish this with the following set of constraints:
\begin{flalign}
\tag{\textbf{Linkage}}
\label{linkage}
&
\left\{ \; \; \; \begin{array}{ll}
\displaystyle{b[i,j] \leq \frac{1}{\epsilon}\cdot (\Lambda[i,\theta]-\Lambda[j,\theta]+1),} & \; i,j=1,\ldots, n, \; i \not= j, \; \theta = 1, \ldots, n-s \\
\displaystyle{\sum_{\theta = 1}^{n-s} \Lambda[i,\theta] = 1,}& \; i = 1, \ldots, n\\
\displaystyle{\sum_{i=1}^n \Lambda[i,\theta] - \epsilon \cdot L[\theta] \geq 0,} & \; \theta = 1, \ldots, n-s\\
\displaystyle{-\sum_{i=1}^n \Lambda[i,\theta] + \frac{1}{\epsilon} \cdot L[\theta] \geq 0,} & \; \theta = 1, \ldots, n-s\\
\displaystyle{\sum_{j=1}^i \Lambda[j,\theta] \geq \sum_{l = \theta+1}^{n-s} \Lambda[i,l],} & \; i =1, \ldots, n, \; \theta = 1, \ldots, n-s, \; \theta \leq i
\end{array} \right.
&
\end{flalign}
The first constraint guarantees that the scaled rate constant $b[i,j]$ is zero if $C_i$ and $C_j$ do not belong to the same linkage class $L_{\theta}$ in the target network. The second constraint guarantees that every complex is assigned to exactly one linkage class. The third and fourth constraints guarantee that $L[\theta]$ is zero if $L_{\theta}^{\star} = \emptyset$ and one if $L_{\theta}^{\star} \not= \emptyset$. That is, it counts the number of linkage classes in the network. The fifth constraint removes redundant permutations in the assignment of complexes to linkage classes and is necessary for computational efficiency (see \cite{J-S6} for full justification).\\

\noindent \emph{Computing stoichiometric subspace dimensions:} In order to determine the deficiency of each linkage class we need to compute $\delta_{\theta} = n_{\theta} - 1 - s_{\theta}$ where $n_{\theta}$ is the number of complexes in $L_{\theta}^{\star}$ and $s_{\theta}$ is the dimension of the stoichiometric subspace of the corresponding subnetwork.

We will compute $s_{\theta}$ by using the a priori known random variables $\delta[i,j]$ to construct a random vector in the span of the reaction vectors on the support on $L_{\theta}^{\star}$. Notice that this spans $S_{\theta}$ by Theorem \ref{theorem1}. With probability one, the dimension will correspond to the minimal number of vectors required to reach this random vector through a linear combination. We want the following logical equivalences:
\[\begin{split}
& \Gamma[i,j,\theta] = 1 \; \Longleftrightarrow \; C_i, C_j \in L_{\theta}^{\star} \\
& S'[i,j,\theta] = 1 \; \Longleftrightarrow \; y_j - y_i \mbox{ is a basis element of } S_{\theta} \\
& S[i,j,\theta] > 0 \; \Longleftrightarrow \; y_j -y_i \mbox{ is a basis element of } S_{\theta}.
\end{split}\]
We can accomplish this with the following constraints:
\begin{flalign}
\tag{\textbf{Stoic}}
\label{stoic}
&
\left\{ \; \; \; \begin{array}{ll}
S'[i,j,\theta] \leq \Gamma[i,j,\theta], & \; i,j=1,\ldots, n, \; i \not= j, \; \theta = 1, \ldots, n-s \\
\displaystyle{S[i,j,\theta] \leq \frac{1}{\epsilon}\cdot S'[i,j,\theta],} & \; i,j=1,\ldots, n, \; i \not= j, \; \theta = 1, \ldots, n-s \\
-S[i,j,\theta] \leq -\epsilon \cdot S'[i,j,\theta], & \; i,j=1,\ldots, n, \; i \not= j, \; \theta = 1, \ldots, n-s \\
\Gamma[i,j,\theta] \leq 1 + \epsilon \cdot (\Lambda[i,\theta]+\Lambda[j,l,\theta]-2),& \; i, j =1, \ldots, n, \; i \not= j, \; \theta = 1, \ldots, n-s\\
\Gamma[i,j,\theta] \geq \epsilon \cdot (\Lambda[i,\theta]+\Lambda[j,\theta]-1),& \; i, j =1, \ldots, n, \; i \not= j, \; \theta = 1, \ldots, n-s\\
\displaystyle{\mathop{\sum_{i,j=1}^{n}}_{i \not= j} S[i,j,\theta] \cdot (Y[k,j]-Y[k,i])} & \\= \displaystyle{\mathop{\sum_{i,j=1}^{n}}_{i \not= j} \Gamma[i,j,\theta] \cdot \delta[i,j] \cdot (Y[k,j]-Y[k,i]),}& \; \theta = 1, \ldots, n-s, \; k = 1, \ldots, n
\end{array} \right.
&
\end{flalign}
The first constraint restricts the basis vectors to those on the same linkage class. The second and third constraints guarantee that $S[i,j,\theta] \in [\epsilon,1/\epsilon]$ if $S'[i,j,\theta] = 0$ and $S[i,j,\theta] = 0$ if $S'[i,j,\theta]=0$. The fourth and fifth constraints guarantee that $\Gamma[i,j,\theta] = 1$ if and only if $C_i \in L_{\theta}^{\star}$ and $C_j \in L_{\theta}^{\star}$. The final constraint determines, for each linkage class, the number of vectors requires to reach a random vector in the corresponding subspace.\\

\noindent \emph{Conditions $1.$ and $2.$ of the Deficiency One Theorem:} We can now accommodate conditions $1.$ and $2.$ of the Deficiency One Theorem with the following constraint sets:
\begin{flalign}
\tag{\textbf{DOT}}
\label{dot}
&
 \left\{ \; \; \; \begin{array}{ll}
\displaystyle{-\mathop{\sum_{i,j=1}^n}_{i \not= j} S'[i,j,\theta] \leq 2  -\sum_{i=1}^n \Lambda[i,\theta],}& \; \theta = 1, \ldots, n-s\\
\displaystyle{\sum_{\theta =1}^{n-s} \mathop{\sum_{i,j=1}^{n}}_{i \not= j} S'[i,j,\theta] = s.}& \; \\
\end{array} \right.
&
\end{flalign}
The first constraint guarantees that $\delta_{\theta} \leq 1$ for all $\theta = 1, \ldots, n-s,$ while the second constraint guarantees that $\displaystyle{\sum_{\theta=1}^{n-s} s_{\theta} = s}$ so that $\displaystyle{\sum_{\theta=1}^{n-s} \delta_{\theta} = \sum_{\theta=1}^{n-s} \left( n_{\theta} - 1 - s_{\theta}\right) = n - \ell - s = \delta.}$ We can accommodate the simpler conditions of Theorem \ref{boros} by using the following constraint set as an alternative to (\ref{dot}):\\
\begin{flalign}
\tag{\textbf{Boros}}
\label{boros1}
&
 \left\{ \; \; \; \begin{array}{ll} \\[-0.1in]
\displaystyle{\sum_{\theta =1}^{n-s} \mathop{\sum_{i,j=1}^{n}}_{i \not= j} S'[i,j,\theta] = s.}\\[0.05in]
\end{array} \right.
&
\end{flalign}

\subsection{Implementing One Terminal Strong Linkage Class}
\label{implementing2}

In order to restrict the target network to contain only a single terminal strong linkage class in each linkage class, we introduce the following result.

\begin{lemma}
\label{lemma1}
Consider a chemical reaction network $(\mathcal{S},\mathcal{C},\mathcal{R})$ with linkage classes $\mathcal{L} = \{ L_1, \ldots, L_{\ell} \}$. Then every linkage class contains only a single terminal strong linkage class if and only if there is a set of complexes $\mathcal{C}' \subseteq \mathcal{C}$ and supplemental set of reactions $\mathcal{R}' \subseteq \mathcal{C} \times \mathcal{C}$ such that:
\begin{enumerate}
\item
$|\mathcal{C}' \cap L_{\theta}| \leq 1$ for every $\theta = 1, \ldots, \ell$;
\item
$(C_i,C_j) \in \mathcal{R}'$ implies that $C_i \in \mathcal{C}'$ and $C_i,C_j \in L_{\theta}$ for some $\theta \in \{ 1, \ldots, \ell \}$.
\item
The network $(\mathcal{S},\mathcal{C},\mathcal{R} \cup \mathcal{R}')$ is weakly reversible.
\end{enumerate}
\end{lemma}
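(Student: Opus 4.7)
\medskip

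\noindent\textbf{Proof proposal.} My plan is to treat the two directions separately and use the fact that reactions in $\mathcal{R}'$ never leave a linkage class of the original network (by condition 2), so the partition $\mathcal{L}$ is preserved under the augmentation $(\mathcal{S},\mathcal{C},\mathcal{R}\cup\mathcal{R}')$.

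For the forward direction, I would argue constructively. Assume every $L_\theta$ contains exactly one terminal strong linkage class $T_\theta$. Pick any representative complex $C_\theta'\in T_\theta$ and set $\mathcal{C}'=\{C_\theta':\theta=1,\dots,\ell\}$. Define the supplemental reaction set by adding, for each linkage class and each complex $C\in L_\theta\setminus T_\theta$, the reaction $C_\theta'\to C$; take $\mathcal{R}'$ to be the collection of all such reactions. Conditions 1 and 2 are then immediate from this construction. For condition 3 I must verify that each $L_\theta$ is strongly connected in the augmented graph. Since $T_\theta$ is a strong linkage class, any two complexes inside $T_\theta$ are already bidirectionally reachable. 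For $C\in L_\theta\setminus T_\theta$, the condensation DAG of $L_\theta$ is finite and acyclic, so every vertex has a directed path in $\mathcal{R}$ to some terminal SCC; uniqueness of the TSLC forces that path to land in $T_\theta$, hence to reach $C_\theta'$. The reverse direction $C_\theta'\to C$ is available by construction. Transitivity finishes strong connectivity, and hence weak reversibility of $(\mathcal{S},\mathcal{C},\mathcal{R}\cup\mathcal{R}')$.

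For the backward direction, the key lemma I would establish is the following: under the hypotheses, for every linkage class $L_\theta$ and every $C\in L_\theta$, there is a path from $C$ to some distinguished complex entirely in $\mathcal{R}$. Split into cases on $|\mathcal{C}'\cap L_\theta|$. If the intersection is empty, then by condition 2 no reaction of $\mathcal{R}'$ touches $L_\theta$, so augmented $L_\theta$ equals original $L_\theta$; weak reversibility makes $L_\theta$ a single strong linkage class, which is vacuously the unique TSLC. Otherwise $\mathcal{C}'\cap L_\theta=\{C^*\}$, and I use the following argument: take any $C\in L_\theta$ and any directed path from $C$ to $C^*$ in the augmented graph (which exists by weak reversibility). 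If this path uses no reaction from $\mathcal{R}'$, we are done. If it does, look at the first such reaction; by condition 2 its source is in $\mathcal{C}'\cap L_\theta=\{C^*\}$, so the path already reaches $C^*$ using only reactions in $\mathcal{R}$. Either way, every complex in $L_\theta$ has a path in $\mathcal{R}$ terminating at $C^*$.

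The contradiction for uniqueness is then quick: if $L_\theta$ had two distinct terminal strong linkage classes $T_1,T_2$, pick $C_i\in T_i$; each has a path in $\mathcal{R}$ to $C^*$, but terminality of $T_i$ forces such paths to remain inside $T_i$, giving $C^*\in T_1\cap T_2$, which is impossible since distinct strong linkage classes are disjoint. The main obstacle I anticipate is the ``first $\mathcal{R}'$ reaction'' argument in the backward direction; the subtlety is ruling out that weak reversibility could be achieved by routing outside $L_\theta$ or through multiple $\mathcal{C}'$ representatives, and condition 2 together with condition 1 is precisely what forecloses those possibilities, so the proof reduces to carefully tracking where the first supplemental reaction on a path must originate.
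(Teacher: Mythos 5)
Your proposal is correct and follows essentially the same route as the paper: the forward direction uses the same construction (representatives of each terminal strong linkage class plus outgoing supplemental reactions), and your backward direction is just the direct form of the paper's contrapositive argument that conditions 1 and 2 permit new outgoing reactions from only one complex per linkage class, so any additional terminal strong linkage class would stay terminal. Your write-up fills in details the paper leaves implicit (the condensation-DAG reachability argument and the ``first $\mathcal{R}'$ reaction'' bookkeeping), but the underlying idea is identical.
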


\begin{proof}
($\Longrightarrow$) Suppose that the network has a single terminal strong linkage class in each linkage class. Let $\mathcal{C}'$ contain a single complex from each terminal strong linkage class and $\mathcal{R}'$ consist of all reactions from a complex in $\mathcal{C}'$ to a complex in the same linkage class. Then conditions $1.$, $2.$, and $3.$ are trivially satisfied.\\

\noindent ($\Longleftarrow$) Suppose that at least one linkage class has multiple terminal strong linkage classes. It follows by condition $1.$ and $2.$ that we may only introduce reactions which lead from a single terminal strong linkage class to any other complex. The remaining terminal strong linkage classes must remain terminal. It follows that we cannot satisfy condition $3.$ and we are done.
\end{proof}

This result says that the condition that every linkage class contains a single terminal strong linkage class can only be satisfied if supplemental reactions from at most one complex in each terminal strong linkage class can make the network weakly reversible. We will use this result in conjunction with Theorem \ref{weaklyreversible} to identify networks satisfying the Deficiency One Theorem.

To accommodate the conditions of Lemma \ref{lemma1}, and therefore condition $3.$ of the Deficiency One Theorem, we introduce the decision variables:
\begin{flalign}
\tag{\textbf{Dec3}}
\label{dec3}
&
\left\{ \; \; \; \begin{array}{ll}
w[i,j] \geq 0, & \; i,j =1,\ldots,n, \; i \not= j \\
w'[i,j] \geq 0, & \; i,j =1,\ldots,n, \; i \not= j \\
C[i,\theta] \in \{0, 1\}, & \; i=1,\ldots, n, \; \theta = 1, \ldots, n-s\\
C'[i] \in [0, 1], & \; i=1,\ldots, n.
\end{array} \right.
&
\end{flalign}\vspace{0.2in}

\noindent \emph{Condition $1.$ of Lemma \ref{lemma1}:} We wish to impose the following logical equivalences:
\[\begin{split}
& C[i,\theta] = 1 \; \Longrightarrow \; C_i \in \mathcal{C}' \mbox{ and } C_i \in L_{\theta} \\
& C[i] =1 \; \Longleftrightarrow \; C_i \in \mathcal{C}' \\
& |\mathcal{C}' \cap L_{\theta}|\leq 1 \mbox{ for every } \theta = 1, \ldots, \ell.
\end{split}\]
We can guarantee this with the following constraint set:
\begin{flalign}
\tag{\textbf{C'}}
\label{Cstar}
&
\left\{ \; \; \; \begin{array}{ll}
C[i,\theta] \leq \Lambda[i,\theta],& \; i =1, \ldots, n, \; \theta = 1, \ldots, n-s\\
\displaystyle{C'[i] = \sum_{\theta =1}^{n-s} C[i,l],}& \; i=1, \ldots, n\\
\displaystyle{\sum_{i=1}^n C[i,\theta] \leq 1,}& \; \theta = 1, \ldots, n-s.
\end{array} \right.
&
\end{flalign}
The first condition guarantees that the intermediate decision variable $C[i,\theta]$ attains the value one only if $C_i \in L_{\theta}$. The second condition determines whether $C_i \in \mathcal{C}'$. The third condition guarantees that each linkage class of the target network contains at most one such complex.\\ 

\noindent \emph{Conditions $2.$ and $3.$ of Lemma \ref{lemma1}:} We wish to impose the following:
\[\begin{split}
& w[i,j] > 0 \; \Longleftrightarrow \; b[i,j] > 0 \\
& w'[i,j] > 0 \; \Longleftrightarrow \; (C_i,C_j) \in \mathcal{R}'\\
& (\mathcal{S}^{\star},\mathcal{C}^{\star},\mathcal{R}^{\star}\cup\mathcal{R}') \mbox{ is weakly reversible.}
\end{split}\]
This can be accomplished with the following constraint sets:

\begin{flalign}
\tag{\textbf{WR}}
\label{wr}
&
\left\{ \begin{array}{ll}
w[i,j]-\epsilon \cdot b[i,j] \geq 0,& \; i,j = 1, \ldots, n, \; i \not= j\\
-w[i,j]+\frac{1}{\epsilon} \cdot b[i,j]  \geq 0,& \; i,j = 1, \ldots, n, \; i \not= j\\
w'[i,j] \leq C'[i],& \; i,j =1, \ldots, n, \; i \not= j.\\
w'[i,j] \leq \frac{1}{\epsilon} \cdot (\Lambda[i,\theta]-\Lambda[j,\theta]+1),& \; i,j = 1, \ldots, n, \; i \not= j, \; \theta = 1, \ldots, n-s\\
\displaystyle{\mathop{\sum_{j =1}^n}_{j \not= i} (w[i,j]+w'[i,j]) = \mathop{\sum_{j=1}^n}_{j \not= i} (w[j,i]+w'[j,i]),} & \; i = 1, \ldots, n
\end{array} \right.
&
\end{flalign}
The first two constraints guarantee that $w[i,j]$ and $b[i,j]$ have the same distribution of positive and zero elements. The third constraint guarantees only reactions from complexes in $\mathcal{C}'$ are allowed to be included in $\mathcal{R}'$, while the fourth constraint guarantees only reactions within linkage classes may be included in $\mathcal{R}'$. The final constraint guarantees the network $(\mathcal{S}^{\star},\mathcal{C}^{\star},\mathcal{R}^{\star} \cup \mathcal{R}')$ is weakly reversible according to Theorem \ref{weaklyreversible} (see \cite{J-S4}).\\


\noindent \emph{Objective function:} The existence of a target network satisfying the requirements of the Deficiency One Theorem is only dependent upon the feasible region of the MILP being non-empty. We still need, however, an objective function. It is often convenient to maximize the number of linkage classes, which minimizes the deficiency of the target network (see \cite{J-S6}). This can be accomplished with:
\begin{equation}
\tag{\textbf{Obj}}
\label{obj}
\mbox{minimize} \; \left\{ \; \; -\sum_{\theta =1}^{n-s} L[\theta] \right.\end{equation}
A target network satisfying the requirements of the Deficiency One Theorem can be found by optimizing (\ref{obj}) over the decision variables (\ref{dec1}), (\ref{dec2}), and (\ref{dec3}), and the constraint sets (\ref{linearlyconjugate}), (\ref{linkage}), (\ref{stoic}), (\ref{dot}), (\ref{Cstar}), and (\ref{wr}). Theorem \ref{boros} may be checked instead of the Deficiency One Theorem by replacing (\ref{dot}) with (\ref{boros1}).

\section{Examples}
\label{examples}

In this example, we apply the algorithm outlined in Section \ref{computational} to example mass action systems. All computations were performed on the author's professional use HP Spectre 360 laptop (Intel Core i7-5500U CPU @ 2.40 GHz, 8.00 GB RAM). The optimization programs used were GLPK \cite{Makhorin2010} and SCIP \cite{Achterberg2009}. \\

\begin{example}
\label{example1}
Consider the following chemical reaction network:
\begin{center}
\begin{tikzpicture}[auto, outer sep=3pt, node distance=2cm,>=latex']
\node (0) {$\O$};
\node [right of = 0, node distance = 2cm] (3X1) {$3X_1$};
\node [right of = 0, node distance = 1cm] (ghost1) {};
\node [below of = ghost1, node distance = 2cm] (3X2) {$3X_2$};
\node [above of = 3X2, node distance = 1 cm] (ghost2) {};
\node [right of = ghost2, node distance = 3cm] (X1X2) {$X_1+X_2$};
\node [right of = X1X2, node distance = 3cm] (2X12X2) {$2X_1+2X_2$};
\path[->] (0) edge node {} (3X2);
\path[->] (3X2) edge node {} (3X1);
\path[->] (3X1) edge node {} (0);
\path[->] (X1X2) edge node {} (2X12X2);
\end{tikzpicture}
\end{center}
where $\O$ corresponds to the zero complex, which has all zero stoichiometric coefficients. The zero complex is commonly used in CRNT to denote inflows and outflows of species in the system.

This network is not directly amenable to the Dezficiency One Theorem because we have $\delta = 1$ for the entire network but $\delta_1 = \delta_2 = 0$ for the two linkage class subnetworks so that $\delta \not= \delta_1 + \delta_2$. We will run the algorithm contained in Section \ref{computational} twice, both times taking all rate constants $k(i,j) = 1$ and $\epsilon = 0.1$.\\

\noindent Dynamical equivalency (i.e. $c_i = 1$ for $i=1, 2$): The algorithm produces the following network:
\begin{center}
\begin{tikzpicture}[auto, outer sep=3pt, node distance=2cm,>=latex']
\node (X1X2) {$X_1+X_2$};
\node [right of = X1X2, node distance = 2.5cm] (3X1) {$3X_1$};
\node [below of = 3X1, node distance = 2cm] (0) {$\O$};
\node [left of = 0, node distance = 2.5cm] (3X2) {$3X_2$};
\path[->] (0) edge node {} (3X1);
\path[->] (3X1) edge node {} (3X2);
\path[->] (3X2) edge node {} (0);
\path[->] (X1X2) edge node {} (3X1);
\path[->] (X1X2) edge node {} (3X2);
\end{tikzpicture}
\end{center}
with all rate constants $k^{\star}(i,j) = 1$. We have excluded the isolated complex $2X_1+2X_2$ and associated self-reaction since it is not relevant to our study by Lemma \ref{noproblem}. It can be easily checked that shown linkage classes has a single terminal strong linkage class consisting of $\{ \O, 3X_1, 3X_2 \}$, and that the deficiency is $\delta = 1$. Since the network consists of only the single linkage class, it follows that the system satisfies the Deficiency One Theorem.\\

\noindent Non-trivial linear conjugacy (i.e. $c_i$ varying): The algorithm produces the following network:
\begin{center}
\begin{tikzpicture}[auto, outer sep=3pt, node distance=2cm,>=latex']
\node (X1X2) {$X_1+X_2$};
\node [right of = X1X2, node distance = 2.5cm] (3X1) {$3X_1$};
\node [below of = 3X1, node distance = 2cm] (0) {$\O$};
\node [left of = 0, node distance = 2.5cm] (3X2) {$3X_2$};
\path[->, bend right = 10] (0) edge node {} (3X1);
\path[->, bend right = 10] (3X1) edge node {} (0);
\path[->] (3X2) edge node {} (0);
\path[->] (X1X2) edge node {} (3X2);
\path[->, bend right = 10] (X1X2) edge node {} (3X1);
\path[->, bend right = 10] (3X1) edge node {} (X1X2);
\end{tikzpicture}
\end{center}
with the values $c_1 = 1$, $c_2 = 4.66666$. To state the rate constants, we set $C_1 = \O$, $C_2 = 3X_1$, $C_3 = 3X_2$, and $C_4 = X_1 + X_2$. The program gives the rate constants $k^{\star}(1,2) = 1$, $k^{\star}(2,1) = 0.571429$, $k^{\star}(2,4) = 0.642857$, $k^{\star}(3,1) = 0.002109$, $k^{\star}(4,2) = 0.158163$, $k^{\star}(4,3) = 0.102041$, and the rest $k^{\star}(i,j) = 0$.

We again have that the network satisfies the Deficiency One Theorem. It is worth noting that this network is weakly reversible while insisting on dynamical equivalence yields a network which was not weakly reversible. This weak reversibility allows us to conclude that there does, in fact, exist a positive steady state, and so this steady state must be unique in the whole state space $\mathbb{R}_{> 0}^2$ since $S = \mathbb{R}^2$.
\end{example}

\begin{example}
\label{example2}
Consider the following mass action system:
\begin{equation}
\label{example2system}
\begin{split}
\dot{x}_1 & = 2x_2^3 - x_1^2 - x_1x_2x_3\\
\dot{x}_2 & = 1 - 3x_2^3+3x_1x_2x_3\\
\dot{x}_3 & = x_1x_2-x_1x_2x_3.
\end{split}
\end{equation}
This system can be converted into a chemical reaction network by the algorithm presented in \cite{H-T} and adapted to optimizing reaction network structures in \cite{Sz-H}. We will not reproduce the algorithm here. It yields the following chemical reaction network:
\begin{center}
\begin{tikzpicture}[auto, outer sep=3pt, node distance=2cm,>=latex']
\node (3X2) {$3X_2$};
\node [right of = 3X2, node distance = 2.5cm] (ghost) {};
\node [above of = ghost, node distance = 0.5cm] (X13X2) {$X_1+3X_2$};
\node [below of = ghost, node distance = 0.5cm] (2X2) {$2X_2$};
\node [right of = X13X2, node distance = 2.5cm] (2X1) {$2X_1$};
\node [right of = 2X1, node distance = 2.5cm] (X1) {$X_1$};
\node [right of = 2X2, node distance = 2.5cm] (0) {$\O$};
\node [right of = 0, node distance = 2.5cm] (X2) {$X_2$};
\node [below of = 3X2, node distance = 2cm] (X1X2) {$X_1+X_2$};
\node [right of = X1X2, node distance = 3.75cm] (X1X2X3) {$X_1 + X_2 + X_3$};
\node [right of = X1X2X3, node distance = 3.75cm] (ghost1) {};
\node [above of = ghost1, node distance = 0.5cm] (X2X3) {$X_2+X_3$};
\node [below of = ghost1, node distance = 0.5cm] (X12X2X3) {$X_1 + 2X_2 + X_3$};
\path[->] (3X2) edge node {} (X13X2);
\path[->] (3X2) edge node {} (2X2);
\path[->] (2X1) edge node {} (X1);
\path[->] (0) edge node {} (X2);
\path[->, bend right = 10] (X1X2) edge node {} (X1X2X3);
\path[->, bend right = 10] (X1X2X3) edge node {} (X1X2);
\path[->] (X1X2X3) edge node {} (X2X3);
\path[->] (X1X2X3) edge node {} (X12X2X3);
\end{tikzpicture}
\end{center}
This network is not weakly reversible, and has two linkage classes which have multiple terminal strong linkage classes. The overall deficiency is $\delta = 4$ while the deficiency of each linkage class is $\delta_{\theta} = 0$ for $\theta = 1, \ldots, 4$. It follows that neither the Deficiency One Theorem nor Theorem \ref{boros} apply to this network.

We now apply the algorithm provided in Section \ref{computational} to see if there is a linearly conjugate system satisfying either of these theorems. We take $\epsilon = 0.1$. The gives the following network:
\begin{center}
\begin{tikzpicture}[auto, outer sep=3pt, node distance=2cm,>=latex']
\node (3X2) {$3X_2$};
\node [below of = 3X2, node distance = 1.5cm] (0) {$\O$};
\node [below of = 0, node distance = 1.5cm] (2X1) {$2X_1$};
\node [right of = 3X2, node distance = 2.5cm] (X1X2) {$X_1+X_2$};
\node [right of = 0, node distance = 2.5cm] (X1X2X3) {$X_1+X_2+X_3$};
\path[->, bend right = 40] (3X2) edge node {} (2X1);
\path[->] (0) edge node {} (3X2);
\path[->] (2X1) edge node {} (0);
\path[->] (3X2) edge node {} (X1X2);
\path[->] (X1X2) edge node {} (X1X2X3);
\path[->] (X1X2X3) edge node {} (3X2);
\path[->] (X1X2X3) edge node {} (0);
\path[->] (X1X2X3) edge node {} (2X1);
\end{tikzpicture}
\end{center}
where $c_1 = 1$, $c_2 = 0.9941642558$, and $c_3 = 0.2840295845$, and the remainder of the complexes are only involved in self-reactions (omitted). We index the complexes as $C_1 = \O$, $C_2 = 3X_2$, $C_3 = 2X_1$, $C_4 = X_1 +X_2$, and $C_5 = X_1 + X_2 + X_3$. The algorithm gives the rate constants $k^{\star}(1,2) = 0.335289$, $k^{\star}(2,3) = 0.999798$, $k^{\star}(2,4) = 0.358313$, $k^{\star}(3,1) = 0.5$, $k^{\star}(4,5) = 3.541427$, $k^{\star}(5,1) = 0.286590$, $k^{\star}(5,2) = 7.718363$, and $k^{\star}(5,3) = 4.463544$.

This network is weakly reversible and has a single linkage class. The deficiency of the network is $\delta = 1$ so that the Deficiency One Theorem applies. Since the stoichiometric subspace is $S = \mathbb{R}_{> 0}^3$, it follows that there is a unique positive steady state for the system (\ref{example2system}) in $\mathbb{R}_{>0}^3$. It is worth noting here that restricting to dynamical equivalence, and not full linear conjugacy, does not produce a network satisfying either the Deficiency Zero Theorem or Theorem \ref{boros}.
\end{example}

\section{Conclusions}
\label{conclusion}

In this paper, we have presented a MILP framework for determining whether a given mass action system is linearly conjugate to a system which satisfies the well-known Deficiency One Theorem or the recent generalization stated here as Theorem \ref{boros}. In particular, we have outlined constraint sets capable of imposing the following critical assumption: (a) that the sum of the deficiencies of each linkage classes is bounded by one and sums to the overall network deficiency; and (b) that each linkage class contains a single terminal strong linkage class. We also presented examples of systems for which the Deficiency One Theorem could be applied only after the algorithm was utilized.

This paper raises some interesting avenues for future work:
\begin{enumerate}
\item
\emph{Determination of optimal complex set:} Current MILP algorithms for determining optimal network structures within CRNT require that the complex set $\mathcal{C}$, and therefore $Y$, be specified prior to application of the algorithm. An initial complex set which is too small, however, runs the risk of not finding an admissible network, while a complex set which is too large increases computationally inefficiency. The network determination algorithm presented in \cite{H-T}, for instance, typically produces complex sets which are unnecessarily large (e.g. only $5$ of the $11$ complexes in Example \ref{example2} were needed in the linearly conjugate system). Determining methods of complex selection which bound the size of $Y$ is therefore a primary concern for future research.
\item
\emph{Parameter-free approach:} The MILP algorithm outlined in this paper depends upon the rate constants of the original network being specified. It is often beneficial, however, to leave this set unspecified; that is, to search over all possible mass action systems associated with a given network structure for a linearly conjugate system satisfying the Deficiency One Theorem. A parameter-free approach was introduced in \cite{J-S5} but is only known to be linear for dynamically equivalent relationships and not fully linearly conjugate ones. We saw in Example \ref{example2}, however, that the full application of linearly conjugate may be required to apply the Deficiency One Theorem. Extending the underlying theory to incorporate parameter-free approaches will therefore be the focus of future work.
\end{enumerate}

\noindent \textbf{Acknowledgments:} The author gratefully thanks San Jos\'{e} State University for its financial and logistic support, and Lake Tahoe for the serenity which lead to the conception of this project.


\end{document}